\date{}
\let\oldproofname=\proofname
\renewcommand{\proofname}{\rm\bf{\oldproofname}}
\newcommand{\remove}[1]{}
\newtheorem{thm}{Theorem}[section]
\newtheorem{cor}[thm]{Corollary}
\newtheorem{lem}[thm]{Lemma}
 \newtheorem{prop}[thm]{Proposition}
\theoremstyle{definition}
 \newtheorem{rmk}[thm]{Remark}
 \newtheorem{claim}[thm]{Claim}
 \def\1{\mathbf{1}}
\newcommand{\bR}{\mathbb{R}}
\newcommand{\bE}{\mathbb{E}}
\newcommand{\bP}{\mathbb{P}}
\newcommand{\N}{\mathcal{N}}
\newcommand{\no}{\nonumber}
\DeclareMathOperator{\lk}{{\textit{lk}}}
\begin{document}
\title{Spectral gap bounds for the simplicial Laplacian and an application to random complexes}

\author{ Samir Shukla\footnote{Department of Mathematics, Indian Institute of Technology Bombay, India. samirshukla43@gmail.com}, D. Yogeshwaran\footnote{Statistics and Mathematics Unit, Indian Statistical Institute Bangaluru, India. d.yogesh@isibang.ac.in}}
\maketitle

\begin{abstract}
	In this article, we derive two spectral gap bounds for the reduced Laplacian of a general simplicial complex. Our two bounds are proven by comparing a simplicial complex in two different ways with a larger complex and with the corresponding clique complex respectively. Both of these bounds generalize the result of Aharoni et al. (2005) \cite{ABM} which is valid only for clique complexes. As an application, we investigate the thresholds for vanishing of cohomology of the neighborhood complex of the Erd\"{o}s-R\'enyi random graph. We improve the upper bound derived in Kahle (2007) \cite{kahle} by a logarithmic factor using our spectral gap bounds and we also improve the lower bound via finer probabilistic estimates than those in Kahle (2007) \cite{kahle}. 
\end{abstract}

\noindent
{\bf Keywords:} spectral bounds, Laplacian, cohomology, neighborhood complex, Erd\"{o}s-R\'enyi random graphs. 

\vspace{0.1cm}
\noindent
{\bf AMS MSC 2010:} 05C80. 05C50. 05E45. 55U10.
\section{Introduction}

Let $G$ be  a graph with vertex set $V(G)$ (often to be abbreviated as $V$) and let $L(G)$ denote the (unnormalized) {\it Laplacian} of $G$. Let  $0 = \lambda_1(G) \leq \lambda_2(G) \leq  \ldots \leq\lambda_{|V|}(G)$ denote the eigenvalues of $L(G)$ in ascending order. Here, the second smallest eigenvalue $\lambda_2(G)$ is called the {\it spectral gap}. The {\it clique complex} of a graph $G$ is the simplicial complex whose simplices are all subsets $\sigma \subset V$ which spans a complete subgraph of $G$. We shall denote the $k$th  reduced cohomology of a simplicial complex $X$ by $\widetilde{H}^k(X)$.  In this article, all simplicial complexes are assumed to be connected and we always consider the reduced cohomology with real coefficients. For more detailed definitions, see Section \ref{sec:prel}. 

In \cite{ABM}, Aharoni et al. proved that if the spectral gap of its $1$-skeleton is large enough, then the cohomology of the corresponding clique complex vanishes. 
\begin{thm} \cite[Theorem 1.2]{ABM} \label{t:abm}
 Let $X$ be the clique complex of a graph $G$. If $\lambda_2(G) > \frac{k|V|}{k+1}$, then $\widetilde{H}^k(X) = 0$.
 \end{thm}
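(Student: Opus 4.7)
The strategy is to use Hodge theory together with a Garland-type localization that passes the spectral gap of $G$ to the higher-order combinatorial Laplacians of the clique complex $X$, completed by induction on $k$.

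\emph{Hodge reduction.} By the combinatorial Hodge theorem, $\widetilde{H}^k(X) \cong \ker \Delta_k$, where $\Delta_k = \delta_{k-1}\delta_{k-1}^{*} + \delta_k^{*}\delta_k$ is the $k$-th reduced combinatorial Laplacian on $C^k(X;\bR)$. It therefore suffices to rule out nonzero harmonic $k$-cochains, i.e.\ to show $\ker \Delta_k = 0$ whenever $\lambda_2(G) > \tfrac{k|V|}{k+1}$.

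\emph{Induction via Garland's method.} Proceed by induction on $k$. For $k=0$, the statement is the connectedness of $X$, which follows at once from $\lambda_2(G) > 0$. For the inductive step, apply a Garland-style localization to rewrite the quadratic form as
\begin{equation*}
\langle \Delta_k f, f\rangle \;=\; \sum_{\sigma \in X^{(k-1)}} \langle L(\lk(\sigma))\, f_\sigma, f_\sigma \rangle \;-\; c_k \|f\|^2,
\end{equation*}
where $f_\sigma$ denotes the localization of $f$ at the $(k-1)$-face $\sigma$, $L(\lk(\sigma))$ is the graph Laplacian of the $1$-skeleton of the link, and $c_k$ is a combinatorial correction constant arising from the multiplicity with which each $k$-simplex is counted.

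\emph{Link spectral estimates.} Because $X$ is a clique complex, the link $\lk(\sigma)$ of any $(k-1)$-face $\sigma = \{v_0,\ldots,v_{k-1}\}$ is again the clique complex of the induced subgraph $G[N_\sigma]$, where $N_\sigma = \bigcap_{i} N_G(v_i)$. A Cauchy-type interlacing for graph Laplacians yields a lower bound on $\lambda_2(G[N_\sigma])$ in terms of $\lambda_2(G)$ and $|V|-|N_\sigma|$. Plugging these link bounds into the Garland identity, together with the hypothesis $\lambda_2(G) > \tfrac{k|V|}{k+1}$, makes the right-hand side strictly positive for every nonzero $f$, so $\ker \Delta_k = 0$.

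\emph{Main obstacle.} The crux is obtaining the sharp constant $\tfrac{k|V|}{k+1}$. This requires careful bookkeeping of (i) the $(k+1)$-fold overcounting of $k$-simplices when summing over their $(k-1)$-subfaces, (ii) the correct normalization of the cochain inner products so that $\sum_\sigma \|f_\sigma\|^2$ matches $\|f\|^2$ up to the right weight, and (iii) the balance between the Garland correction $c_k$ and the interlacing loss $|V| - |N_\sigma|$ in each link. Aligning these constants so that exactly the threshold $\tfrac{k|V|}{k+1}$ drops out of the induction is the main technical challenge.
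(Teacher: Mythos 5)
Your Hodge-theoretic reduction matches the paper, but the heart of your argument --- localizing at $(k-1)$-faces and importing the global hypothesis into each link by interlacing --- is not the paper's route, and as sketched it has a genuine gap: the step you yourself defer as ``the main technical challenge'' is precisely the step that fails, and it is not mere bookkeeping. First, your identity is not correct as written: summing the link quadratic forms over codimension-one faces gives, for $f\in C^k(X;\bR)$,
\begin{equation*}
\sum_{\tau\in X(k-1)}\bigl\langle L\bigl(G_{\lk_X(\tau)}\bigr)f_\tau,\,f_\tau\bigr\rangle \;=\; \|\delta_k f\|^2+k\sum_{\sigma\in X(k)}\mbox{deg}(\sigma)\,f(\sigma)^2,
\end{equation*}
so the correction is degree-weighted, not a constant multiple of $\|f\|^2$; it only concerns $\|\delta_k f\|^2$ (you must take $f$ harmonic to dispose of $\|\delta_{k-1}^{\ast}f\|^2$, and you need $\delta_{k-1}^{\ast}f=0$ to know $f_\tau\perp\1$ in $C^0(\lk_X(\tau))$, without which $\lambda_2$ of the link is not even applicable). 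Granting all that, Garland's criterion becomes: for every $k$-simplex $\sigma$, $\sum_{v\in\sigma}\lambda_2\bigl(G[N(\sigma\setminus\{v\})]\bigr)>k\,|N(\sigma)|$. The only interlacing the global hypothesis supplies is $\lambda_2(G[W])\geq\lambda_2(G)-(|V|-|W|)$ (Fiedler, iterated over vertex deletions), and combining it with the exact count $\sum_{v\in\sigma}|N(\sigma\setminus\{v\})|=(k+1)|N(\sigma)|+(k+1)+M_\sigma$, where $M_\sigma$ is the number of outside vertices adjacent to exactly $k$ of the $k+1$ vertices of $\sigma$, the criterion reduces to $(k+1)\lambda_2(G)>k|V|+R_\sigma$, with $R_\sigma$ the number of vertices non-adjacent to at least two vertices of $\sigma$. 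So $\lambda_2(G)>\frac{k|V|}{k+1}$ suffices only when $R_\sigma=0$ for every $k$-simplex; in general you are forced to assume $\lambda_2(G)>\frac{k|V|}{k+1}+\frac{R_\sigma}{k+1}$, an error that need not vanish and can be of order $|V|$. Hence the link-localization-plus-interlacing plan proves a strictly weaker theorem, not the sharp threshold. (Also, your ``induction on $k$'' never actually invokes its inductive hypothesis: the Garland step addresses $H^k$ directly.)

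The paper's proof (following Aharoni--Berger--Meshulam) avoids links altogether: one localizes at vertices, $\psi_u(\tau)=\psi(u\tau)$, and proves the global recursion $k\mu_k(X)\geq(k+1)\mu_{k-1}(X)-|V|$ (Theorem \ref{t:abm1}, generalized here as Theorem \ref{t:general}), where the troublesome cross terms (the quantities $I_1-I_2$ and part of $T$ in Section \ref{sec:spectralgap}) cancel exactly because $X$ is a clique complex; iterating gives $\mu_k(X)\geq(k+1)\lambda_2(G)-k|V|$, and Proposition \ref{p:hodge} concludes. This is exactly why the paper calls Theorem \ref{t:abm} a \emph{global} counterpart of Garland's method, and why it remarks that Garland's approach needs spectral control of every link --- information which a bound on $\lambda_2(G)$ alone does not sharply provide. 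To salvage your plan you would need either a substantially stronger interlacing statement exploiting the clique structure of the links, or to switch to the vertex-localized recursion.
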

Aharoni et al. (\cite{ABM}) used Theorem \ref{t:abm} to derive a lower bound for the homological connectivity of the independence complex of a graph $G$ (a simplicial complex whose simplices are the independent sets of $G$) and which implies Hall type theorems for systems of disjoint representatives in hypergraphs.

Theorem \ref{t:abm} can be viewed as a global counterpart for clique complexes of the spectral gap  results of Garland (\cite[Theorem 5.9]{G}) and Ballman-{\' S}wi\c{a}tkowski (\cite[Theorem 2.5]{BS})  for vanishing of cohomology of a simplicial complex. In their simplest form,  these results say that for a pure $k$-dimensional finite simplicial complex $\Delta$, if the spectral gap of the link $\lk_{\Delta}(\tau)$ is sufficiently large for every ($k-2$)-dimensional simplex $\tau$, then $\widetilde{H}^{k-1}(\Delta) = 0$. A very powerful application of the afore-mentioned result can be found in Kahle (\cite{MK1}) where this was used to derive sharp vanishing thresholds for cohomology of random clique complexes. See \cite{Hoffman2017} for more applications of this spectral gap result in random topology. Recently,  Hino and Kanazawa (\cite[Theorem 2.5]{HK}) generalized this result of  Garland and Ballman-{\' S}wi\c{a}tkowski, and  upper bounded the $(d-1)^{th}$ Betti number of a pure $d$-dimensional simplicial complex $\Delta$ by the sum (taken over all $(d-2)$-dimensional simplices $\tau$) of the number of `suitably small' eigenvalues in the spectrum of the laplacian of the link $\lk_{\Delta}(\tau)$. They used this quantitative version of the spectral gap result to prove weak laws for (persistent) lifetime sums of randomly weighted clique and $d$-dimensional complexes.

Motivated by such applications of spectral gap bounds to random complexes, we seek to generalize Theorem \ref{t:abm} to more general simplicial complexes. We achieve two different generalizations (see Corollaries \ref{cor:subcomplex} and \ref{cor:general}) by comparing an arbitrary simplicial complex with a larger complex and the corresponding clique complex in two different ways. Our aim in exploring this generalization was to obtain vanishing threshold for cohomology in other random complex models. We use one of our generalizations to improve the vanishing threshold for cohomology of a random neighborhood complex (see Theorem  \ref{t:main}) by a logarithmic factor. After the result, we also discuss why it is difficult to apply Garland's method and hence a different spectral gap result is needed. By computing the probabilities involved more precisely than in \cite{kahle}, we also improve the lower bound by a polynomial factor.

The paper is organised as follows. In Section \ref{subsec:notation}, we introduce some notation, which we shall use in rest of the paper.
In Section \ref{subsec:spectral}, we state our results, which relate the cohomology and spectral gap. In Section \ref{subsec:randomneighborhood}, we state the results about the neighborhood complexes of a random graph.  We also discuss our improvements in relation to the results of \cite{kahle}. We give the necessary preliminaries from graph theory and topology in Section \ref{sec:prel} . Section \ref{sec:proof} contains the proofs of the results stated in Sections \ref{subsec:spectral} and \ref{subsec:randomneighborhood}. 

\subsection{Notation} \label{subsec:notation}


We shall use the following notations throughout this paper.  Let $X$ be a (simplicial) complex on $n$ vertices.  We denote by $G_X$ the  $1$-skeleton of $X$, i.e., $G_X$ is the graph whose vertices are the $0$-dimensional simplices and the edges are the $1$-dimensional simplices of $X$. We shall always assume $G_X$ to be connected. Let $X(k)$ denote the set of all $k$-dimensional oriented simplices of $X$. $X$ is said to be a {\em clique complex} if for all $k \geq 0$,  $X(k)$ is the set of $(k+1)$-cliques in the graph $G_X$.  For $k \geq -1$, let $C^k(X; \bR)$ denote the space of real valued $k$-cochains of $X$. Let $\delta_k(X) : C^k(X;\bR) \to C^{k+1}(X;\bR)$ denote the coboundary operator. 

For $k \geq 0$, let $\delta_k^{\ast}(X)$ denote the adjoint of $\delta_k(X)$ and $\Delta_k(X) := \delta_{k-1}(X) \delta_{k-1}^{\ast}(X) + \delta_{k}^{\ast}(X)\delta_k(X)$ denote the (simplicial) Laplacian (see Section \ref{sec:prel} for details). Let $\mu_k(X)$ denote the minimal eigenvalue of $\Delta_k(X)$. Observe that $\lambda_2(G_X) = \mu_0(X)$. We again emphasize that we consider reduced cohomology with real coefficients.

We shall now define two ways to measure the difference between two complexes. The first compares a complex to its subcomplex whereas the second compares a complex $X$ to the corresponding clique complex of $G_X$. For  $k \geq 1$, define
\begin{equation}
\label{e:Sk}
S_k(X,X') := \max \limits_{\sigma \in X'(k)} | \{ \tau \in X(k+1) \setminus X'(k+1) \ | \  \sigma \subset \tau  \}|, 
\end{equation}
where $X'$ is a subcomplex of $X$.  We shall now on use $X$ to denote a complex and $X'$ to denote a subcomplex of $X$.

For a simplex $\sigma \in X$, the {\it link} of $\sigma$ is the complex defined as 
$$
\lk_X(\sigma) := \{\eta \in X  \ | \ \sigma \cup \eta \in X \ \text{and} \ \sigma \cap \eta = \emptyset\}.
$$ 
For $k \geq 1$ and $1 \leq j \leq k+1$, define
\begin{align}
\label{e:Dkj}
D_k(X,j) := & \max\limits_{\sigma \in X(k)} | \{u  \ | \ u \notin \lk_X(\sigma) \ \text{and} \ \exists \ \mbox{exactly} \ j \ \mbox{vertices} \ v_1, \ldots, v_j \in \sigma \ \mbox{such that} \nonumber\\ 
& \, \, u \in \lk_X(\sigma \setminus \{v_i\}) \ \forall \ 1\leq i \leq j \}|.
\end{align}

\begin{rmk}
\label{r:Dk}
If the $k$-skeleton of $X$ is a clique complex of $G_X$ and  $u \in \lk_X(\sigma \setminus \{v\}) \cap \lk_X(\sigma \setminus \{w\})$ for some $\{v, w\} \subseteq \sigma$ then $u \in  \lk_X(\sigma \setminus \{v\}) \ \forall \ v \in \sigma$, {\it i.e.}, any $(k+1)$-subset of $\sigma \cup \{u\}$ will be a $k$-simplex. Therefore, in this case $D_k(X, j) = 0$ for all $2 \leq j \leq k$ and  
\begin{align}
\label{def:Dk}
D_k(X,k+1)= &\max\limits_{\sigma \in X(k)} | \{ w \ |  \ w \notin \lk_X(\sigma) \ \text{and any} \ (k+1)\text{-subset of} \ \sigma \cup \{w\} \ \text{is a} \ k\text{-simplex}  \}| . 
\end{align}

Thus, if $X$ is a clique complex then $D_k(X, j) = 0$ for all $2 \leq j \leq k+1$.    
\end{rmk}

\subsection{Spectral  gap and cohomology}
\label{subsec:spectral}

We shall now present our two spectral gap results and corollaries that generalize Theorem \ref{t:abm}. We first recall the following key theorem from \cite{ABM} that was used to derive Theorem \ref{t:abm}. 
\begin{thm}\cite[Theorem $1.1$]{ABM} \label{t:abm1}
Let $X$ be a clique complex. For $k \geq 1$,	
$$ k \mu_k(X) \geq (k+1) \mu_{k-1}(X) - |V(G_X)|. $$
\end{thm}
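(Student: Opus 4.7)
My plan is to compare $X$ with the complete simplicial complex $K_V$ on the vertex set $V = V(G_X)$ with $|V| = n$. Because $K_V$ is contractible, all its reduced cohomology groups vanish, and a direct computation shows $\Delta_k(K_V) = n \cdot I$ on $C^k(K_V;\bR)$ for every $k \geq 0$. This is the key identity I will exploit: it plays the role of the ``ideal'' comparison object, and the deviation of $\Delta_k(X)$ from $n \cdot I$ will be controlled by $\mu_{k-1}(X)$.

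For $f \in C^k(X;\bR)$, extend by zero to $\tilde f \in C^k(K_V;\bR)$. The clique (flag) hypothesis on $X$ has two immediate consequences. First, if $\rho \in K_V(k-1) \setminus X(k-1)$, then $\rho$ already contains a non-edge of $G_X$, hence $\rho \cup v \notin X(k)$ for every $v \in V$, and $\tilde f$ vanishes on all the relevant faces; therefore $\delta_{k-1}^*(K_V)\tilde f$ is supported on $X(k-1)$ and agrees there with $\delta_{k-1}^*(X) f$, giving $\|\delta_{k-1}^*(K_V)\tilde f\|^2 = \|\delta_{k-1}^*(X) f\|^2$. Second, for $\tau \in X(k+1)$ every $k$-face lies in $X(k)$, so $(\delta_k(K_V)\tilde f)(\tau) = (\delta_k(X) f)(\tau)$, and I can orthogonally split $\delta_k(K_V)\tilde f = \delta_k(X) f + h$ with $h$ supported on $K_V(k+1)\setminus X(k+1)$. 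Plugging both observations into $\langle \Delta_k(K_V)\tilde f, \tilde f\rangle = n\|f\|^2$ yields
\[
\langle \Delta_k(X) f, f\rangle = n\|f\|^2 - \|h\|^2.
\]
Repeating the analysis at level $k-1$, for any $g \in C^{k-1}(X;\bR)$ with associated residue $h'$ one obtains $\langle \Delta_{k-1}(X) g, g\rangle = n\|g\|^2 - \|h'\|^2$, so $\|h'\|^2 \leq (n - \mu_{k-1})\|g\|^2$ whenever $g$ is orthogonal to $\ker \Delta_{k-1}$. Rearranging the inequality to be proved, it now suffices to establish
\[
k\|h\|^2 \leq (k+1)(n - \mu_{k-1})\|f\|^2 \qquad \text{for every } f \in C^k(X;\bR).
\]

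The main remaining step, and the expected main obstacle, is this quadratic-form bound. For each $\tau \in K_V(k+1) \setminus X(k+1)$, the clique property forces $(\delta_k(K_V)\tilde f)(\tau) = \sum_i (-1)^i \tilde f(\tau \setminus v_i)$ to reduce to a signed sum of at most two non-vanishing terms, namely the faces $\tau \setminus u$ for those vertices $u$ incident to every non-edge of $\tau$ in $G_X$; moreover, every $(k-1)$-face of such a $\tau$ still lies in $X(k-1)$ because removing one more vertex always omits an endpoint of each missing edge. My plan is to distribute the squared contribution of each offending $\tau$ across its $\binom{k+2}{2}$ codimension-$2$ subfaces and run a Cauchy--Schwarz/averaging argument, thereby converting $\|h\|^2$ into a sum of level-$(k-1)$ defect quantities $\|h'(g_i)\|^2$ for suitable $(k-1)$-cochains $g_i$ built from $f$. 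The constant $(k+1)/k$ should emerge from the ratio of codimension-$2$ to codimension-$1$ subfaces of a $(k+1)$-simplex, echoing the pointwise identity $\delta_k^*\delta_k + \delta_{k-1}\delta_{k-1}^* = n \cdot I$ on $K_V$. The delicate part is the sign bookkeeping in this distribution step, since the bound must be sharp (equality holds for $X = K_V$) and not merely lossy.
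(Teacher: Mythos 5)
The comparison with the full simplex $K_V$ is set up correctly: $\Delta_k(K_V)=n\cdot I$, extension by zero gives $\|\delta_{k-1}^{\ast}(K_V)\tilde f\|=\|\delta_{k-1}^{\ast}(X)f\|$ and the orthogonal split $\delta_k(K_V)\tilde f=\delta_k(X)f+h$, hence $\langle\Delta_k(X)f,f\rangle=n\|f\|^2-\|h\|^2$, and your reduction of the theorem to $k\|h\|^2\le(k+1)(n-\mu_{k-1})\|f\|^2$ is valid (the restriction to $f\perp\ker\Delta_{k-1}$ is unnecessary, since $\mu_{k-1}$ is the \emph{minimal} eigenvalue). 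The problem is that this "remaining step" is not a technical lemma but the entire theorem in disguise. Unwinding your own identities, $\|h\|^2=n\|f\|^2-\langle\Delta_k(X)f,f\rangle$ and $\|h'(f_u)\|^2=n\|f_u\|^2-\langle\Delta_{k-1}(X)f_u,f_u\rangle$, so the route you indicate (bounding $k\|h\|^2$ by $\sum_{u}\|h'(f_u)\|^2$ and using $\sum_u\|f_u\|^2=(k+1)\|f\|^2$) is \emph{exactly equivalent} to the inequality $k\langle\Delta_k f,f\rangle\ \ge\ \sum_{u\in V}\langle\Delta_{k-1}f_u,f_u\rangle-n\|f\|^2$ for clique complexes, which is precisely what the paper (following Aharoni--Berger--Meshulam) establishes via Claims \ref{claim3.1}--\ref{claim3.2}, the bound on $T$, and the flag-complex cancellation $I_1=I_2$, before invoking the minimax principle. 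At exactly this point your argument stops: the proposed "distribute each $h(\tau)^2$ over the $\binom{k+2}{2}$ codimension-two faces and apply Cauchy--Schwarz" is only a plan, with no explicit weights, no identification of the cochains $g_i$, and no sign analysis. Since $h(\tau)$ is a signed two-term sum $\pm f(\tau\setminus a)\mp f(\tau\setminus b)$, any naive averaging or Cauchy--Schwarz discards precisely the cross terms $f(\tau\setminus a)f(\tau\setminus b)$ whose cancellation (the flag property) is what makes the stated constant attainable; preserving them is the actual content of the proof, and it is missing.

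There is also a concrete error in the sketch: it is not true that every $(k-1)$-face of an offending $\tau\in K_V(k+1)\setminus X(k+1)$ lies in $X(k-1)$. If $\tau$ misses exactly one edge $\{a,b\}$ and $k\ge 2$, removing two vertices disjoint from $\{a,b\}$ still leaves the non-edge, so that face is not in $X$; the claim only holds for $k=1$. This does not by itself invalidate the strategy, but it indicates that the bookkeeping underlying the planned distribution step has not been carried out. As it stands, the proposal is a correct (and rather clean) reformulation of the theorem via the identity $\Delta_k(K_V)=n\cdot I$, followed by an unproved assertion of its key inequality, so there is a genuine gap.
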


We prove our first main spectral gap result by directly comparing the operators $\delta_k(X)$ and $\delta_k(X')$. Following the theorem, we state a simple corollary which generalizes Theorem \ref{t:abm}.. 
\begin{thm} \label{t:subcomplex}
   For every simplicial complex $X$ and every   subcomplex $X'$ of $X$, and for  $k \geq 1$,
\begin{equation} \label{e:subcomplexname}
\mu_k(X') \geq \mu_k(X) -(k+2)S_k(X,X').
\end{equation}
\end{thm}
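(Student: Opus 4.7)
The plan is to compare the two Rayleigh quotients via extension-by-zero. Let $f' \in C^k(X';\mathbb{R})$ be a unit-norm eigenvector realizing $\mu_k(X')$, so $\langle \Delta_k(X') f', f' \rangle = \mu_k(X')$. Define $f \in C^k(X;\mathbb{R})$ by $f(\sigma) = f'(\sigma)$ for $\sigma \in X'(k)$ and $f(\sigma) = 0$ for $\sigma \in X(k) \setminus X'(k)$. Clearly $\|f\| = \|f'\| = 1$. Since $\mu_k(X)$ is the minimum eigenvalue of $\Delta_k(X)$, we have $\langle \Delta_k(X) f, f \rangle \geq \mu_k(X)$, and the whole strategy is to show that this Rayleigh quotient exceeds $\langle \Delta_k(X') f', f' \rangle$ by at most $(k+2)S_k(X,X')$.

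First I would analyze the two pieces of $\Delta_k$ separately. For the $\delta_{k-1}^{*}$ piece, I expect no correction term: using the explicit formula $(\delta_{k-1}(X)^{*} f)(\sigma) = \sum_{v : \sigma \cup \{v\} \in X(k)} \pm f(\sigma \cup \{v\})$ and the fact that $X'$ is a subcomplex (so $\sigma \cup \{v\} \in X'(k) \iff \sigma \cup \{v\} \in X(k)$ and $f = 0$ off $X'(k)$), one checks that $(\delta_{k-1}(X)^{*} f)(\sigma)$ vanishes for $\sigma \notin X'(k-1)$ and agrees with $(\delta_{k-1}(X')^{*} f')(\sigma)$ otherwise. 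Hence $\|\delta_{k-1}(X)^{*} f\|^{2} = \|\delta_{k-1}(X')^{*} f'\|^{2}$. For the $\delta_k$ piece, on $\tau \in X'(k+1)$ we again recover $(\delta_k(X') f')(\tau)$, so
\begin{equation*}
\|\delta_k(X) f\|^{2} = \|\delta_k(X') f'\|^{2} + \sum_{\tau \in X(k+1) \setminus X'(k+1)} |(\delta_k(X) f)(\tau)|^{2}.
\end{equation*}
Adding the two contributions yields $\langle \Delta_k(X) f, f \rangle = \mu_k(X') + E$, where $E$ is the displayed sum of extra terms.

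The main task is then to bound $E \leq (k+2)\, S_k(X,X')$. For a fixed $\tau \in X(k+1) \setminus X'(k+1)$, $(\delta_k(X) f)(\tau) = \sum_{i} (-1)^{i} f(\tau \setminus \{v_i\})$ is a sum over the $k+2$ vertices of $\tau$, so Cauchy--Schwarz gives
\begin{equation*}
|(\delta_k(X) f)(\tau)|^{2} \leq (k+2) \sum_{\sigma \subset \tau,\, |\sigma|=k+1} |f(\sigma)|^{2}.
\end{equation*}
Summing over $\tau \in X(k+1)\setminus X'(k+1)$ and swapping the order of summation, each $\sigma \in X'(k)$ is counted at most $S_k(X,X')$ times by the definition in~\eqref{e:Sk}, while $\sigma \notin X'(k)$ contribute zero because $f$ vanishes there. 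This gives $E \leq (k+2) S_k(X,X') \|f'\|^{2} = (k+2) S_k(X,X')$. Combining with $\mu_k(X) \leq \langle \Delta_k(X) f, f\rangle = \mu_k(X') + E$ and rearranging yields \eqref{e:subcomplexname}.

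The most delicate step to write carefully is the extension-by-zero bookkeeping, namely verifying that the $\delta_{k-1}^{*}$ term contributes no error; once that is granted, the Cauchy--Schwarz/double-counting estimate is routine, and the factor $(k+2)$ arises precisely as the number of codimension-one faces of a $(k+1)$-simplex.
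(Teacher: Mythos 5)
Your proposal is correct and is essentially the paper's own argument: extend the minimizing eigenvector of $\Delta_k(X')$ by zero, observe that the $\delta_{k-1}^{\ast}$ contribution is unchanged (Lemma \ref{l:lemma1}), bound the excess coming from $\tau \in X(k+1)\setminus X'(k+1)$ by $(k+2)S_k(X,X')\|\phi\|^2$ via Cauchy--Schwarz over the $k+2$ facets of $\tau$ and double counting against the definition of $S_k$ (Lemma \ref{l:lemma2}), then conclude with the minimax bound on the Rayleigh quotient of $\Delta_k(X)$. One small wording slip: the parenthetical ``$\sigma\cup\{v\}\in X'(k)\iff\sigma\cup\{v\}\in X(k)$'' is not literally true; what you actually need, and what your computation uses, is only that $X'$ is closed under taking faces, so any coface of $\sigma$ lying outside $X'(k)$ contributes zero because $f$ vanishes there.
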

\begin{cor} \label{cor:subcomplex}
Let $X$ be a clique complex and $X'$ have the same $1$-skeleton as $X$ i.e.,$G_{X'} = G_X$.  If $\lambda_2(G_X)> \frac{kn}{k+1} + \frac{k+2}{k+1} S_k(X,X')$, then $\widetilde{H}^k(X') = 0$.
\end{cor}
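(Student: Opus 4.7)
The plan is to chain the two spectral inequalities already available. Theorem \ref{t:abm1} controls $\mu_k(X)$ for a clique complex inductively in terms of $\mu_{k-1}(X)$, while Theorem \ref{t:subcomplex} controls the loss when passing from $X$ to a subcomplex $X'$. Combining these produces a lower bound on $\mu_k(X')$ in terms of $\lambda_2(G_X)$, $n$, and $S_k(X,X')$, and Hodge theory converts positivity of this bound into vanishing of $\widetilde{H}^k(X')$.

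Concretely, I first observe that since $X$ is the clique complex of $G_X = G_{X'}$, every simplex of $X'$ spans a clique in $G_X$ and hence belongs to $X$. Thus $X'$ is a subcomplex of $X$ and Theorem \ref{t:subcomplex} is applicable. I then establish by induction on $k \geq 0$ the bound
$$\mu_k(X) \geq (k+1)\lambda_2(G_X) - k n,$$
whose base case $k=0$ is the identification $\mu_0(X) = \lambda_2(G_X)$ recorded in Section \ref{subsec:notation}. For the inductive step, substituting the inductive hypothesis for $k-1$ into Theorem \ref{t:abm1} yields
$$k\mu_k(X) \geq (k+1)\mu_{k-1}(X) - n \geq (k+1)\bigl[k\lambda_2(G_X) - (k-1)n\bigr] - n = k(k+1)\lambda_2(G_X) - k^2 n,$$
and dividing by $k$ gives the claim.

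Next I apply Theorem \ref{t:subcomplex} to transfer this bound to $X'$:
$$\mu_k(X') \geq \mu_k(X) - (k+2)S_k(X,X') \geq (k+1)\lambda_2(G_X) - k n - (k+2)S_k(X,X'),$$
which is strictly positive precisely under the hypothesis $\lambda_2(G_X) > \frac{k n}{k+1} + \frac{k+2}{k+1}S_k(X,X')$. The standard Hodge decomposition for reduced real cochains identifies $\widetilde{H}^k(X') \cong \ker \Delta_k(X')$, and since $\mu_k(X') > 0$ forces $\Delta_k(X')$ to be injective, we conclude $\widetilde{H}^k(X') = 0$.

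I do not anticipate a serious obstacle here: the corollary is essentially a mechanical combination of the two stated theorems together with the observation that $X'$ sits inside the clique complex $X$. The only care needed is in tracking the constants $(k+1)$ and $k n$ in the induction so that the threshold on $\lambda_2(G_X)$ matches the hypothesis exactly. As a sanity check, taking $X' = X$ (so $S_k(X,X') = 0$) recovers Theorem \ref{t:abm} verbatim.
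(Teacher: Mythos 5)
Your proof is correct and follows essentially the same route as the paper: induction on $k$ in Theorem \ref{t:abm1} to get $\mu_k(X) \geq (k+1)\lambda_2(G_X) - kn$, then Theorem \ref{t:subcomplex} to pass to $X'$, and Proposition \ref{p:hodge} to conclude. The constants check out, so nothing further is needed.
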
 
If $X'= X$, then $S_k(X,X') = 0$ and so Corollary \ref{cor:subcomplex} implies Theorem \ref{t:abm}. Now, we present our generalization of Theorem \ref{t:abm1} using $D_k(X,j)$'s and another corollary that generalizes Theorem \ref{t:abm}.
\begin{thm} \label{t:general}
For any simplicial complex $X$ and for $k \geq 1$,
\begin{align} 
 k \mu_k(X) \geq (k+1) \mu_{k-1}(X) - n - \sum\limits_{j=2}^{k+1} (k(k+1)+j) D_k(X,j).
\end{align}
\end{thm}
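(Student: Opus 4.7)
The plan is to adapt the proof of Theorem~\ref{t:abm1} from \cite{ABM} to arbitrary simplicial complexes, while carefully tracking how $X$ deviates from a clique complex. First, I would fix a unit eigencochain $f \in C^k(X;\mathbb{R})$ realizing $\mu_k(X)$, so that $\mu_k(X) = \|\delta_{k-1}^*(X) f\|^2 + \|\delta_k(X) f\|^2$. Expanding each squared norm combinatorially, $\|\delta_k f\|^2 = \sum_{\tau \in X(k+1)} (\delta_k f(\tau))^2$ produces a diagonal part $\sum_{\sigma \in X(k)}\deg_{k+1}(\sigma)\,f(\sigma)^2$ plus signed cross terms $\pm f(\sigma)f(\sigma')$ indexed by ordered pairs of distinct $k$-faces of a common $(k+1)$-simplex $\tau$. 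Similarly, $\|\delta_{k-1}^* f\|^2 = (k+1)\|f\|^2 + (\text{cross terms})$, where now the cross terms are indexed by \emph{all} ordered pairs of distinct $k$-simplices sharing a $(k-1)$-face, without any requirement that their union lie in $X(k+1)$.

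The crucial observation is that for a clique complex these two indexing sets coincide, so after matching signs the ABM argument reduces the problem to a comparison with the Laplacian of the full simplex on $n$ vertices, whose spectrum is concentrated at $n$. For general $X$, the two indexing sets differ by the \emph{defective pairs}: $\sigma, \sigma' \in X(k)$ with $|\sigma \cap \sigma'| = k$ but $\sigma \cup \sigma' \notin X(k+1)$. The quantity $D_k(X,j)$ in \eqref{e:Dkj} is designed precisely to enumerate these: if $\sigma \in X(k)$ and $u \notin \lk_X(\sigma)$ belongs to $\lk_X(\sigma \setminus v_i)$ for exactly $j$ of the vertices $v_i \in \sigma$, then this single $u$ produces exactly $j$ defective partners $\sigma_i := (\sigma \setminus v_i) \cup \{u\}$ of $\sigma$. (When $j \le 1$, the pair is not genuinely defective in the clique-complex sense and can be absorbed into the main identity.)

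Applying the Cauchy--Schwarz bound $|f(\sigma)f(\sigma_i)| \le \tfrac{1}{2}(f(\sigma)^2 + f(\sigma_i)^2)$ to each defective cross term and aggregating over $\sigma$ and $u$, the total deviation from the clique-complex ABM identity is at most $\sum_{j=2}^{k+1}(k(k+1)+j)\,D_k(X,j)\,\|f\|^2$. The coefficient $k(k+1)+j$ splits naturally: the $k(k+1)$ factor counts the ordered pairs of distinct vertices in a $(k+1)$-simplex, accounting for the would-have-been matched cross terms on the $\|\delta_k f\|^2$ side, while the additional $j$ accounts for the $j$ defective siblings $\sigma_i$ that $u$ contributes on the $\|\delta_{k-1}^* f\|^2$ side. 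Substituting this error estimate into the modified ABM identity and invoking $\langle \Delta_{k-1}(X) g, g\rangle \ge \mu_{k-1}(X)\|g\|^2$ for the natural auxiliary $(k-1)$-cochain yields the stated inequality. The main obstacle will be the sign-tracking in the expansion: verifying that matched cross terms cancel with the correct signs so that exactly the defective pairs remain on the error side, and that Cauchy--Schwarz distributes the bound symmetrically between $\sigma$ and its $j$ defective partners so that the coefficient comes out to be precisely $k(k+1)+j$ rather than something weaker.
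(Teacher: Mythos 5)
Your target decomposition of the error (an $n$ term plus $\sum_j(k(k+1)+j)D_k(X,j)$ controlled via Cauchy--Schwarz on ``bad'' configurations) is the right one, but the mechanism you describe has a genuine gap. The ``crucial observation'' is false: even for a clique complex, the cross terms of $\|\delta_{k-1}^{\ast}f\|^2$ are indexed by \emph{all} pairs $v\tau, w\tau$ of $k$-simplices sharing a $(k-1)$-face $\tau$, and such a pair is matched by a cross term of $\|\delta_k f\|^2$ only when $vw\tau\in X(k+1)$; in a clique complex this fails whenever $v\not\sim w$, and there are typically order $n$ such unmatched pairs per simplex. So the unmatched (``defective'') pairs do not measure the deviation of $X$ from a clique complex, and bounding them all by $|f(\sigma)f(\sigma')|\le\tfrac12(f(\sigma)^2+f(\sigma')^2)$ produces an error of order $n\|f\|^2$ per coordinate direction, which would not even recover Theorem~\ref{t:abm1} in the clique case. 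The paper never bounds those down-type cross terms at all: they are transferred \emph{exactly} into the localized cochains.

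What is missing is precisely the machinery that produces the main term $(k+1)\mu_{k-1}(X)-n$: the vertex localizations $\psi_u$, the double-counting identity $\sum_u\|\psi_u\|^2=(k+1)\|\psi\|^2$, and the identities of Claims~\ref{claim3.1}, \ref{claim3.3} and \ref{claim3.2} (valid for general complexes), which convert $k\langle\Delta_k\psi,\psi\rangle$ into $\sum_u\langle\Delta_{k-1}\psi_u,\psi_u\rangle$ plus two residues: a diagonal degree-comparison term $T=\sum_{\sigma}\bigl(\sum_{\tau\in\sigma(k-1)}\deg(\tau)-k\deg(\sigma)\bigr)\psi(\sigma)^2$ and a cross-term mismatch $I_1-I_2$ supported on configurations $\sigma\in X(k)$, $u\notin\lk_X(\sigma)$ with $u\in\lk_X(\sigma\setminus v)\cap\lk_X(\sigma\setminus w)$. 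Note that the products appearing there are $\psi\bigl((\sigma\setminus v)\cup u\bigr)\psi\bigl((\sigma\setminus w)\cup u\bigr)$, i.e.\ between the two partners containing $u$, not between $\sigma$ and a partner as in your sketch; your attribution of the coefficient is also off, since the $+j$ comes from the diagonal term $T$ (Claim~\ref{T1}) and the $k(k+1)$ from $|I_1-I_2|$ (Claim~\ref{i1i2}), the latter requiring a delicate re-indexing that swaps the roles of the reference simplex $\sigma$ and the simplex $\eta$ carrying $\psi(\eta)^2$ while checking that the ``exactly $j$'' condition in \eqref{e:Dkj} is preserved. Without the localization identities and this bookkeeping, the plan of ``absorbing the $j\le 1$ pairs into the main identity'' and comparing directly with the full-simplex Laplacian does not close; you should restructure the argument along the ABM localization, proving the two error bounds separately.
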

%
\begin{cor} \label{cor:general}
    Let $k$-skeleton of $X$ be same as that of the $k$-skeleton of the clique complex of $G_X$. If $\lambda_2(G_X) > \frac{kn}{k+1} + (k+ 1) D_k(X,k+1)$, then $\widetilde{H}^k(X) = 0$.
 
\end{cor}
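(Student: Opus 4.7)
The plan is to iterate Theorem \ref{t:general} from $\mu_0(X) = \lambda_2(G_X)$ up to $\mu_k(X)$, using the clique-skeleton hypothesis to clear out almost every $D_i(X,j)$ term that appears along the way. The single leftover defect sits at the top level $(i,j) = (k, k+1)$, and the factor $(k+1)\, D_k(X, k+1)$ in the hypothesis is precisely what is designed to absorb it.

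First, I would verify that the hypothesis forces $D_i(X,j) = 0$ whenever $1 \le i \le k$ and $(i,j) \neq (k, k+1)$. For $2 \le j \le i$ with $i \le k$, this is Remark \ref{r:Dk} applied to the $i$-skeleton of $X$, which agrees with the $i$-skeleton of the clique complex of $G_X$ by truncation of the hypothesis. For the top entry $(i, i+1)$ with $i \le k-1$, any witnessing vertex $u$ together with $\sigma$ would form an $(i+2)$-clique in $G_X$, hence an $(i+1)$-simplex of $X$ by the clique-skeleton assumption at level $i+1 \le k$, contradicting $u \notin \lk_X(\sigma)$. Consequently, for $1 \le i \le k-1$ Theorem \ref{t:general} collapses to the clique-complex inequality
\begin{equation*}
i\, \mu_i(X) \ge (i+1)\, \mu_{i-1}(X) - n,
\end{equation*}
while at level $i = k$ only the top term survives, giving
\begin{equation*}
k\, \mu_k(X) \ge (k+1)\, \mu_{k-1}(X) - n - (k+1)^2\, D_k(X, k+1).
\end{equation*}

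Next, I would mimic the Aharoni--Berger--Meshulam induction that underlies Theorem \ref{t:abm}, but carrying an extra slack term. Writing $D := D_k(X, k+1)$ and $c := (k+1)\, D$, the hypothesis becomes $\mu_0(X) > \tfrac{kn}{k+1} + c$. A short induction on $j$ using the first inequality above yields
\begin{equation*}
\mu_j(X) > \frac{(k-j)\, n}{k+1} + (j+1)\, c \qquad \text{for } 0 \le j \le k-1;
\end{equation*}
the only algebraic identity needed is $(j+1)(k-j+1) - (k+1) = j(k-j)$. Substituting $j = k-1$ into the top-level inequality and simplifying gives $k\, \mu_k(X) > (k-1)(k+1)^2\, D \ge 0$, so $\mu_k(X) > 0$. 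The conclusion $\widetilde{H}^k(X) = 0$ then follows from the Hodge-theoretic identification $\dim \widetilde{H}^k(X;\bR) = \dim \ker \Delta_k(X)$.

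The main obstacle is the combinatorial first step: I need to be certain that the $k$-skeleton hypothesis really does suppress every intermediate $D_i(X, j)$, so that the iteration reduces cleanly to the one in \cite{ABM} and the sole surviving defect at the top level is precisely $D_k(X, k+1)$. Once that reduction is in hand, the arithmetic of the induction is arranged so that an additive $(k+1)\, D_k(X, k+1)$ in the spectral gap is exactly what is needed to push $\mu_k(X)$ over zero.
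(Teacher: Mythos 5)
Your proposal is correct and follows essentially the same route as the paper: the clique-skeleton hypothesis eliminates every $D_i(X,j)$ except $D_k(X,k+1)$, Theorem \ref{t:general} is iterated from $\mu_0(X)=\lambda_2(G_X)$ up to $\mu_k(X)$, and Proposition \ref{p:hodge} converts $\mu_k(X)>0$ into $\widetilde{H}^k(X)=0$. Your write-up is simply more explicit than the paper's about why the intermediate defect terms vanish and about carrying the slack $(k+1)D_k(X,k+1)$ through the induction.
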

From Remark \ref{r:Dk}, we know that for a clique complex $X$, $D_k(X, j) = 0 \  \forall \ 2 \leq j \leq k+1 $. Hence, in this case Theorem \ref{t:general} implies Theorem  \ref{t:abm1} and Corollary \ref{cor:general} implies Theorem \ref{t:abm}. The proof of Theorem \ref{t:general} follows the ideas of \cite{ABM} but some of the terms that cancel out in the case of clique complexes do not cancel out for a general simplicial complex. Hence, one requires more care in deriving the necessary bounds. 
%

\subsection{Neighborhood complex of a random graph} \label{subsec:randomneighborhood}
We shall now introduce neighborhood complex of random graphs, recall results from \cite{kahle} and state our results about the vanishing of cohomology of these random complexes. For more on random graphs, we refer the reader to \cite{Janson11, Frieze16} and for a survey on random simplicial complexes refer to \cite{Kahle14a}. 

The {\it neighborhood complex}, $\N(G)$ of a graph $G$ is the simplicial complex whose  simplices are those subsets $\sigma$ of $V$ which have a common neighbor. Neighborhood complexes were introduced by Lov{\' a}sz (\cite{lovasz}) in his proof of the famous Kneser conjecture. We now introduce the Erd\"{o}s-R\'enyi random graph $G(n,p)$ on $n$ vertices and with edge-probability $p$. $G(n,p)$ is constructed by deleting edges of the complete graph on $n$ vertices independently of each other with probability $1-p$ or equivalently the edges are retained independently of each other with probability $p$. In this article, we consider $p$ as a function of $n$. A {\em graph property $\mathcal{P}$} is a class of graphs such that for any two isomorphic graphs either both belong to the class or both do not belong to the class. For any graph property $\mathcal{P}$, we say that $G(n, p) \in \mathcal{P}$ {\it with high probability} ({\it w.h.p.}) if $\mathbb{P} (G(n, p) \in \mathcal{P}) \to 1$ as $n \to \infty$. We shall also say $\mathcal{P}$ holds for $G(n,p)$ instead of   $G(n, p) \in \mathcal{P}$. 

In \cite{kahle}, M. Kahle considered the  neighborhood complex of the Erd\"{o}s-R\'enyi random graph. He showed that (see \cite[Theorem 2.1]{kahle}), if ${n \choose k+2} (1-p^{k+2})^{n-(k+2)} = o(1)$, then w.h.p. $\widetilde{H}^i(\N(G(n,p))) = 0$, for $i \leq k$. In particular, if $p = \Big(\frac{(k+2)\log n  +c_n}{n}\Big)^{\frac{1}{k+2}}$ with $c_n \to \infty$, then w.h.p. $\widetilde{H}^i(\mathcal{N}(G(n, p))) = 0$ for $i \leq k$.  Using Corollary \ref{cor:general}, we achieve the following improvement on Kahle's result.
\begin{thm}
\label{t:main}
Let $k \geq 1$. If $p = \Big(\frac{(k+1)\log n  +c_n}{n}\Big)^{\frac{1}{k+2}}$ with $c_n \to \infty$, then  $\widetilde{H}^i(\mathcal{N}(G(n, p))) = 0$ w.h.p. for $i \leq k$.
\end{thm}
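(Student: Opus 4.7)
Plan: We apply Corollary \ref{cor:general} to $X := \N(G(n, p))$ once for each $i \in \{1, \ldots, k\}$. Three ingredients are needed, each holding w.h.p. First, $G_X = K_n$, so that $\lambda_2(G_X) = n$: the expected number of pairs $\{u, v\}$ with no common neighbor equals $\binom{n}{2}(1 - p^2)^{n - 2}$, which is $o(1)$ because $n p^2$ is a positive power of $n$ times a polylogarithmic factor.

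Second, the $k$-skeleton of $X$ coincides with that of the clique complex of $G_X$. Given $G_X = K_n$, this reduces to the statement that every $(k+1)$-subset of $V$ has a common neighbor. The expected number of exceptions, $\binom{n}{k+1}(1 - p^{k+1})^{n - k - 1}$, is $o(1)$ because $n p^{k+1} = n^{1/(k+2)} \bigl((k+1)\log n + c_n\bigr)^{(k+1)/(k+2)}$ grows as a positive power of $n$. Third, $D_i(X, i+1) \leq 1$ for every $i \leq k$, with $D_i(X, i+1) = 0$ for $i < k$. Granted the previous two ingredients, $D_i(X, i+1)$ simplifies to $\max_{\sigma \in X(i)} |\{w : \sigma \cup \{w\}\text{ has no common neighbor in }G(n, p)\}|$. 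For $i < k$, a first-moment bound $\binom{n}{i+1}(n - i - 1)(1 - p^{i+2})^{n - i - 2} = o(1)$ suffices, since $n p^{i+2}$ is again a positive power of $n$.

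The delicate case is $i = k$: here $n p^{k+2} = (k+1)\log n + c_n$, and the first moment is only $O(n e^{-c_n})$, which need not vanish. We instead show that w.h.p.\ no $\sigma \in \binom{V}{k+1}$ admits two distinct bad extensions $w_1 \neq w_2$. Fixing the triple $(\sigma, w_1, w_2)$, condition on $M := |N_G(\sigma)|$, which is $\mathrm{Bin}(n - k - 1, p^{k+1})$-distributed. The two bad events become ``$w_j$ has no neighbor in $N_G(\sigma) \setminus \{w_j\}$''; the corresponding edge sets overlap in at most the single edge $w_1 w_2$, so the conditional joint probability is at most $(1 - p)^{2M - 3}$. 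A direct computation gives $\bE[(1 - p)^{2M}] = n^{-2(k+1) + o(1)} e^{-2 c_n}$, and summing the product of these two factors over the $\binom{n}{k+1}\binom{n - k - 1}{2}$ choices of triple yields a union bound of order $n^{1-k} e^{-2c_n}$. This vanishes for every $k \geq 1$: via $n^{1-k} \to 0$ when $k \geq 2$, and via $e^{-2c_n} \to 0$ when $k = 1$.

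Combining the three ingredients, for sufficiently large $n$ we have $\lambda_2(G_X) = n > \frac{i n}{i + 1} + (i + 1) D_i(X, i+1)$ for every $i \leq k$ w.h.p., whence Corollary \ref{cor:general} yields $\widetilde{H}^i(\N(G(n, p))) = 0$ for each such $i$. The chief obstacle is the $i = k$ bound on $D_k$: the improved threshold sits precisely where individual bad $(k+2)$-subsets start appearing, and only the above second-moment-style argument guarantees that these bad subsets are spread out enough that no $k$-simplex absorbs more than one.
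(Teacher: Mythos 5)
Your proposal is correct in outline and in substance, and it follows the same overall skeleton as the paper: show w.h.p.\ that $\N(G(n,p))$ has full $k$-skeleton (so $G_X=K_n$, $\lambda_2=n$, and the $k$-skeleton agrees with the clique complex of $G_X$), bound $D_k(\N(G(n,p)),k+1)$, and invoke Corollary \ref{cor:general}; your treatment of $i<k$ via $D_i=0$ is just a rephrasing of the paper's remark that a full $k$-skeleton kills cohomology below dimension $k$. Where you genuinely diverge is the key estimate on $D_k$. The paper observes that each pair $(\sigma,w)$ counted in $D_k$ yields a $(k+2)$-set inducing the boundary of a $(k+1)$-simplex, so $D_k\le B_k$, and a plain first-moment bound gives $\bE B_k \approx n e^{-c_n}/(k+2)!=o(n)$; since Corollary \ref{cor:general} tolerates $D_k$ up to $n/(k+1)^2$, Markov's inequality finishes the proof. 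You instead prove the much stronger statement $D_k\le 1$ w.h.p.\ by bounding the probability that some $(k+1)$-set has two bad extensions, conditioning on $M=|N(\sigma)|\sim\mathrm{Bin}(n-k-1,p^{k+1})$ and using that the two edge sets overlap in at most the edge $w_1w_2$; this conditioning argument is sound (the relevant edges lie outside the edges determining $N(\sigma)$), and the union bound does close. So your closing claim that ``only'' such a pair-counting argument works is an overstatement: the paper's cheaper first-moment route suffices precisely because the spectral corollary has linear-in-$n$ slack in $D_k$, which is the slack your approach leaves unused. Two smaller cautions: for $k=1$ your bound reads $n^{0+o(1)}e^{-2c_n}$, and since $c_n$ may grow arbitrarily slowly, a genuine $n^{o(1)}$ loss would be fatal; one must check (as a careful computation confirms, using $(1-p)^{-3}=1+o(1)$ and $np^{k+3}=o(1)$ in the relevant regime, with explicit constants from $\binom{n}{k+1}\binom{n-k-1}{2}$) that the prefactor is actually $O(1)$, so state that explicitly rather than hiding it in $o(1)$ exponents. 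Also, your asymptotics implicitly assume $c_n=o(n)$ (so that $p\to0$); the extreme regime of very large $c_n$ is easier but should be either excluded by a remark or handled by noting the bounds only improve there.
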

Note that in Theorem \ref{t:main},  ${n \choose k+2} (1-p^{k+2})^{n-(k+2)} \to \infty$ and therefore we cannot apply Kahle's result in this case. His proof involves showing that for $p$ satisfying ${n \choose k+2} (1-p^{k+2})^{n-(k+2)} = o(1)$, $\N(G(n, p))$ has the full $(k+1)$-skeleton, i.e., any $t$-tuple of vertices form a $(t-1)$-simplex in $\N(G(n,p))$ for $t \leq k+2$. This trivially yields that $\widetilde{H}^i(\mathcal{N}(G(n, p))) = 0$ for $i \leq k$. But one would expect that this is a very strong condition for vanishing of cohomology and our theorem shows that this can be reduced a little. We expect that our bound for the threshold for vanishing of cohomology to be reduced even further.

The above theorem is one of our motivations to prove spectral bounds for vanishing of cohomology for general complexes. This study was inspired by the proof of a sharp threshold result for vanishing of cohomology of clique complexes of Erd\"{o}s-R\'enyi random graphs in \cite{MK1} which was proven using the spectral gap result of Garland and Ballman-{\' S}wi\c{a}tkowski. This required to show that with suitably high probability the spectral gap is sufficiently large for the normalized Laplacian of the $1$-skeleton of the link. For a clique complex of an Erd\"{o}s-R\'enyi random graph, it is easy to see that the $1$-skeleton of the link of a simplex is also an Erd\"{o}s-R\'enyi random graph and hence by proving suitable spectral bounds for the normalized Laplacian of Erd\"{o}s-R\'enyi random graphs (\cite{Hoffman2017}), the result of Garland and Ballman-{\' S}wi\c{a}tkowski was applied. But it is not easy to use the same argument to prove Theorem \ref{t:main}, as the $1$-skeleton of the link of a simplex in the neighborhood complex of the  Erd\"{o}s-R\'enyi random graph is not an  Erd\"{o}s-R\'enyi random graph. It has a complicated dependency structure making it harder to analyse the spectral gap of the corresponding random graph. 

Kahle (see \cite[Corollary 2.9]{kahle}) also  showed that for $p= n^{\alpha}$, if $\frac{-2}{k+1} < \alpha <  \frac{-4}{3(k+1)}$, then w.h.p. $\widetilde{H}^k(\mathcal{N}(G(n, p)))$ $ \neq 0$. We derive more exact bounds for the probabilities involved but still use the same argument as that of \cite{kahle} to extend this result as well.
\begin{prop} \label{p:extension}
 Let $p = n^{\alpha}$. If $\frac{-2}{k+1} < \alpha <  \frac{-1}{k+1}$, then w.h.p.   $\widetilde{H}^k(\mathcal{N}(G(n, p))) \neq 0$.
 \end{prop}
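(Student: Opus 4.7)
The plan is to follow Kahle's strategy and apply a second-moment argument to the count $N$ of hollow $(k+1)$-simplices in $\N(G(n,p))$: that is, $(k+2)$-subsets $S \subseteq V$ such that every $(k+1)$-face $S\setminus\{v\}$ has a common neighbor in $G(n,p)$, while $S$ itself has none. Each hollow $S$ embeds a boundary sphere $\partial[S]$ in $\N(G(n,p))$ whose filling $(k+1)$-simplex is absent, producing a candidate non-trivial class in $\widetilde{H}_k \cong \widetilde{H}^k$.

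For a fixed $(k+2)$-subset $S$, I would classify each potential witness $u \in V\setminus S$ by $|N(u)\cap S|$: $u$ witnesses $S$ itself iff $|N(u)\cap S|=k+2$ (probability $p^{k+2}$), and $u$ witnesses exactly the face $S\setminus\{v\}$ iff $|N(u)\cap S|=k+1$ with $v$ missing (probability $p^{k+1}(1-p)$). Hollowness becomes the conjunction that no candidate witnesses $S$ and that each of the $k+2$ ``missing-vertex'' labels is realized by at least one witness. An inclusion--exclusion over which labels go unrealized, combined with the sharp asymptotic $(1-p^{k+1})^{n-k-2} = e^{-np^{k+1}}(1+o(1))$ valid when $np^{k+1}\to 0$, yields $\mathbb{P}[S \text{ hollow}] \sim (np^{k+1})^{k+2}/(k+2)!$ and hence $\mathbb{E}[N] \to \infty$ precisely when $\alpha > -\tfrac{2}{k+1}$. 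The upper bound $\alpha < -\tfrac{1}{k+1}$ ensures $np^{k+1}\to 0$, so the approximations are valid and $(k+2)$-subsets are typically not simplices of $\N(G(n,p))$.

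To control $\mathrm{Var}(N)$, I would split the sum $\sum_{(S,S')}\mathbb{P}[S,S'\text{ both hollow}]$ over ordered pairs by $j = |S\cap S'| \in \{0,1,\ldots,k+1\}$. The $j=0$ contribution gives $(1+o(1))\mathbb{E}[N]^2$, and for each $1 \le j \le k+1$ one partitions witnesses by which of $S\setminus S'$, $S'\setminus S$, $S\cap S'$ they are adjacent to and applies the same face-realization inclusion--exclusion to show that every overlap contribution is $o(\mathbb{E}[N]^2)$. Chebyshev then gives $N \ge 1$ w.h.p., and a separate moment bound on $f_{k+1}(\N(G(n,p)))$ shows that the number of hollow cycles on essentially disjoint vertex sets exceeds the rank of $\partial_{k+1}$, so at least one hollow cycle represents a non-trivial cohomology class. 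The principal obstacle is the overlap $j = k+1$, where $S$ and $S'$ share a single $(k+1)$-face and the two hollow events are strongly correlated; Kahle's cruder bounds $(1-x)^m \ge 1-mx$ lose polynomial factors here and cap the range at $\alpha < -\tfrac{4}{3(k+1)}$, whereas reaching $\alpha < -\tfrac{1}{k+1}$ requires preserving the exponential-in-$np^{k+1}$ behaviour throughout --- precisely the finer probabilistic estimates alluded to in the paper.
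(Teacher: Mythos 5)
There is a genuine gap, and it sits exactly at the step you treat most casually: passing from ``a hollow $(k+1)$-simplex boundary exists in $\N(G(n,p))$'' to ``$\widetilde{H}^k(\N(G(n,p)))\neq 0$''. A hollow sphere $\partial[S]$ is only a \emph{candidate} class; it may well bound a different $(k+1)$-chain. Your proposed repair --- show that the number of hollow spheres on essentially disjoint vertex sets exceeds $\operatorname{rank}\partial_{k+1}\le f_{k+1}(\N(G(n,p)))$ --- fails quantitatively on part of the claimed range. Indeed, with $p=n^{\alpha}$ and $\alpha<-\tfrac1{k+1}$ one has $np^{k+2}\to 0$, so $f_{k+1}$ has order $n^{\,k+3+\alpha(k+2)}$, while the expected number of hollow $(k+2)$-sets has order $n^{\,(k+2)(2+\alpha(k+1))}$. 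The hollow count dominates only when $(k+2)(2+\alpha(k+1))>k+3+\alpha(k+2)$, i.e.\ $\alpha>-\tfrac{k+1}{k(k+2)}$, and $-\tfrac{k+1}{k(k+2)}$ lies strictly inside $\bigl(-\tfrac{2}{k+1},-\tfrac{1}{k+1}\bigr)$ for every $k\ge 1$. For example, for $k=1$ and $\alpha\in(-1,-2/3)$ the filled triangles of $\N(G(n,p))$ vastly outnumber the empty ones, so the dimension count gives nothing on that subinterval, which is precisely part of what the proposition asserts. (A smaller issue: your asymptotic $\mathbb{P}[S\ \text{hollow}]\sim (np^{k+1})^{k+2}/(k+2)!$ has a spurious $(k+2)!$; the faces are distinguishable, so the probability is of order $(np^{k+1})^{k+2}$. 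This does not affect the threshold.)

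The paper circumvents this homological difficulty entirely by working with a stronger local configuration in the \emph{graph}, not the complex: it counts $(k+2)$-sets $A$ such that $G_n[A]$ is a maximal clique that cannot be extended to an $X_{A,A'}$ subgraph, and invokes Kahle's retraction theorem (Proposition \ref{kahleprop}) to conclude that $\N(G_n)$ retracts onto $\mathbb{S}^{k}$, which forces $\widetilde{H}^k\neq 0$ without any comparison to $f_{k+1}$. (Note a maximal $(k+2)$-clique already produces your hollow sphere; the non-extendability to $X_{A,A'}$ is the extra condition that makes the sphere a retract, hence homologically essential.) The second-moment shell of your argument is fine and is also what the paper does, but it must be applied to this restricted count $\Lambda_{k+2}$, and the delicate points are then the lower bound on $\mathbb{E}\Lambda_{k+2}$ (subtracting the $X_{A,A'}$ extensions) and keeping the $(1-p^{k+2})^{n}$-type factors exact --- not the overlap term $|S\cap S'|=k+1$ you single out. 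As written, your argument does not prove the proposition on the full interval $\bigl(-\tfrac{2}{k+1},-\tfrac{1}{k+1}\bigr)$.
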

Despite the improvement of the bounds presented here, it is still an open problem to determine sharp bounds for vanishing of cohomology of neighborhood complexes. From Theorem \ref{t:main}, Proposition \ref{p:extension} and \cite[Corollary 2.5]{kahle}, we summarize the known bounds as follows : For $k \geq 1$, 
\begin{align*}
\widetilde{H}^k(\mathcal{N}(G(n, p))) & = 0 \ \ \mbox{w.h.p. if $p = n^{\alpha}$ with $\alpha < \frac{-4}{k+2}$ for $k$ even and $\alpha < \frac{-4(k+2)}{(k+1)(k+3)}$ for $k$ odd},\\
\widetilde{H}^k(\mathcal{N}(G(n, p))) & \neq 0 \ \ \mbox{w.h.p. if $p = n^{\alpha}$ with $\frac{-2}{k+1} < \alpha <  \frac{-1}{k+1}$}, \\
\widetilde{H}^k(\mathcal{N}(G(n, p))) & = 0 \ \ \mbox{w.h.p. if $p = \Big(\frac{(k+1)\log n  +c_n}{n}\Big)^{\frac{1}{k+2}}$ with $c_n \to \infty$}. 
\end{align*}
\begin{rmk}
	In \cite[Corollary 2.5]{kahle}, the homology groups are given with integer coefficients, but the result was proven by  showing that $\N(G(n,p))$ deformation retracts onto a subcomplex of dimension $k-1$. Hence, the same proof is valid irrespective of the coefficients of the homology groups. Also, we have used the fact that the homology and cohomology groups with real coefficients are isomorphic to each other.
	\end{rmk}

\section{Preliminaries} \label{sec:prel}
A  {\it graph} $G$ is a  pair $(V, E)$,  where $V$ is the set of vertices of $G$  and $E \subset V \times V$ is called the set of edges. We shall always assume $V$ to be finite. If we wish to avoid ambiguities about the graph under consideration, we will denote that vertex set as $V(G)$ and the edge set as $E(G)$. If $(u, v) \in E$, it is also denoted by $u \sim v$ and we say that $u$ is adjacent to $v$. For any  $A \subset V$, the neighborhood of $A$ is defined as $N(A):= \{u \in  V \ | \ u \sim a \,\,\forall\,\, a \in A \}$. The {\it degree} of a vertex $v$ is $|N(v)|$ and it is denoted by $deg(v)$. For a subset $X \subset V$, the induced subgraph $G[X]$ is the subgraph whose vertex set is $X$ and the set of edges is $\{(u, v) \in E \ | \ u, v \in X\}$. The {\it complete graph} or a {\it clique} of order $n$ is a graph on $n$ vertices, where any two distinct vertices are adjacent and it is denoted by $K_n$. All the graphs in this article are assumed to be undirected and simple (i.e., $(x, y) \in E$ implies $x \neq y$).

The (unnormalized) {\it Laplacian} of a graph $G$ is the $|V| \times |V|$ matrix $L(G)$  given by
$$
            L(G)(x, y) := \begin{cases}
            \ deg(x)&  x=y ,\\
\ -1 &  (x,y) \in E,\\

\ 0 &   \text{otherwise}.\\
                       \end{cases}
 $$
  For details about the graph Laplacian, we refer the reader to \cite{bapat}.
We next introduce the concept of simplicial complexes, which are higher dimensional counterparts of graphs.

A  finite (abstract) {\it simplicial complex X} is a family of subsets of a finite set, which is  closed under the deletion of elements, i.e.,   if $\alpha \in X$ and $\beta \subset \alpha$,
then $\beta \in X$. For $\sigma \in X$, the dimension of $\sigma$ is defined to be $|\sigma| - 1$ and denoted by $dim(\sigma)$. If dim($\sigma$) = $k$, then it is said to be a $k$-dimensional simplex or $k$-simplex. The $0$-dimensional
simplices are called vertices of $X$. We denote the set of vertices of $X$ by $V(X)$. The {\it boundary} of a $k$-dimensional simplex
$\sigma $ is the simplicial complex, consisting of  all  simplices $\tau \subset \sigma$ of dimension $\leq k-1$. We refer the reader to \cite{dk,Munkres} for more details about simplicial complexes.
%
%
Let $X$ be a simplicial complex. Two ordering of vertices of a simplex $\sigma = \{v_0, v_1, \ldots, v_k\}$ are called equivalent if they differ from one another  by an even permutation. Thus the ordering of the vertices of a simplex divided into two equivalences classes. Each of these classes is called an orientation of $\sigma$. An oriented simplex is a simplex $\sigma$ together with  an orientation and we denote it by $[ v_0, \ldots, v_k ]$.

Let  each simplex of $X$ have an  arbitrary but fixed orientation. Let $X(k)$ denote the set of oriented $k$-simplices of $X$.  For $k \geq 0$, let $C_k(X)$ denote the free abelian group with basis $X(k)$, with the  relation $[ v_0,v_1, \ldots, v_k ] =  - [ v_1, v_0, \ldots, v_k ]$ for each $k$-simplex $\sigma = \{v_0, \ldots, v_k\}$.

%

For $k \geq 0$, let $C^k(X; \bR)$ be the dual group Hom$(C_k(X); \bR)$ where Hom$(.,.)$ is the group of Homomorphisms. The elements of $C^k(X;\bR)$  are called $k$-cochains of $X$. For an ordered $(i+1)$-simplex $\sigma = [v_0, \ldots, v_{i+1}]$, the $j$-face of $\sigma$ is an ordered $i$-simplex $\sigma_j=[v_0, \ldots, \hat{v_j}, \ldots, v_{i+1}]$.  The co-boundary operator $\delta_k (X) : C^k(X;\bR) \to C^{k+1}(X;\bR)$ is given by
 $$ 
 \delta_k(X) \phi (\sigma) :=  \sum\limits_{j=0}^{k+1} (-1)^j \phi(\sigma_j).
 $$
 By letting $C^{-1}(X;\bR) = \bR$, define  $\delta_{-1}(X) :C^{-1}(X;\bR) \to C^{0}(X;\bR) $  by $\delta_{-1}(X)(x)(v) = x$ for all $x \in \bR$ and $v \in X(0)$.  It is well known that $\delta_{k} \delta_{k-1} = 0 $ for all $k \geq 1$. For $k \geq 0$, the quotient Ker $\delta_k(X)$ / Im $\delta_{k-1}(X)$ is called the $k$-th reduced cohomology group of $X$ with real coefficients and it is denoted by $\widetilde{H}^k(X)$. For more details about cohomology, we refer the reader to \cite{Munkres} and see \cite{Herbert} for a simpler introduction. 

For each $k \geq -1$, we can define the standard inner product on $C^k(X;\bR)$ by 
 $ \langle \phi, \psi \rangle := \sum_{\sigma \in X(k)} \phi(\sigma)\psi(\sigma)$
 and the corresponding  $L^2$ norm $|| \phi || := (\sum_{\sigma \in X(k)} \phi(\sigma)^2)^{\frac{1}{2}}$. 
 
 Let $\delta_k^{\ast}(X): C^{k+1}(X;\bR) \to C^{k}(X;\bR)$ denote the adjoint of $\delta_k(X)$ with respect to the standard  inner product, i.e., the unique operator satisfying $ \langle \delta_k(X)\phi, \psi \rangle = \langle \phi, \delta_k^{\ast}(X) \psi \rangle$ for all $\phi \in C^k(X; \bR)$ and $\psi \in C^{k+1}(X;\bR)$. The reduced $k$-Laplacian of $X$ is the mapping 
 $$
 \Delta_k (X) := \delta_{k-1}(X) \delta_{k-1}^{\ast}(X) + \delta_k^{\ast}(X)\delta_k(X) : C^{k}(X;\bR) \to C^{k}(X;\bR).
$$ 

It can be easily verified that if $\mathbb{I}$ denotes the $|V(G_X)| \times $ $|V(G_X)|$ matrix with all entries $1$, then $\mathbb{I}+ L(G_X)$ represents $\Delta_0(X)$ with respect to the standard basis. In particular, the minimal eigenvalue of $\Delta_0(X)$ ({\it i.e.}, $\mu_0(X)$) is  $\lambda_2(G_X)$. More details about the operator $\Delta_k(X)$ can be found in \cite{BS} and \cite{G}. 

We now recall the following well known simplicial Hodge theorem.
\begin{prop}\label{p:hodge}
For $k \geq 0$, Ker $\Delta_k(X) \cong \widetilde{H}^k(X)$.
\end{prop}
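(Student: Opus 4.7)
The plan is to prove this via the standard orthogonal-decomposition argument in finite-dimensional inner product spaces, applied to the cochain complex. The key point is that $C^k(X;\bR)$ is finite-dimensional and carries the inner product defined in the preliminaries, so all the usual facts (existence of adjoints, $\mathrm{Ker}(T^\ast) = \mathrm{Im}(T)^\perp$, orthogonal complements) are available.

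First, I would compute, for any $\phi \in C^k(X;\bR)$,
\begin{align*}
\langle \Delta_k(X)\phi, \phi \rangle
&= \langle \delta_{k-1}(X)\delta_{k-1}^{\ast}(X)\phi, \phi\rangle + \langle \delta_k^{\ast}(X)\delta_k(X)\phi, \phi\rangle \\
&= \|\delta_{k-1}^{\ast}(X)\phi\|^2 + \|\delta_k(X)\phi\|^2,
\end{align*}
using only the defining property of the adjoint. Since both summands on the right are non-negative, $\Delta_k(X)\phi = 0$ is equivalent to $\phi \in \mathrm{Ker}(\delta_k(X)) \cap \mathrm{Ker}(\delta_{k-1}^{\ast}(X))$.

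Next, I would invoke the finite-dimensional orthogonal decomposition
\[
C^k(X;\bR) = \mathrm{Im}(\delta_{k-1}(X)) \oplus \mathrm{Ker}(\delta_{k-1}^{\ast}(X)),
\]
which is an instance of the general identity $\mathrm{Ker}(T^{\ast}) = \mathrm{Im}(T)^{\perp}$. The cochain-complex relation $\delta_k(X) \circ \delta_{k-1}(X) = 0$ noted in Section \ref{sec:prel} yields the inclusion $\mathrm{Im}(\delta_{k-1}(X)) \subseteq \mathrm{Ker}(\delta_k(X))$; intersecting the above decomposition with $\mathrm{Ker}(\delta_k(X))$ therefore produces
\[
\mathrm{Ker}(\delta_k(X)) = \mathrm{Im}(\delta_{k-1}(X)) \oplus \bigl(\mathrm{Ker}(\delta_k(X)) \cap \mathrm{Ker}(\delta_{k-1}^{\ast}(X))\bigr).
\]

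Combining the two steps, the second summand on the right is precisely $\mathrm{Ker}(\Delta_k(X))$, so the composition of inclusion followed by the quotient projection, $\mathrm{Ker}(\Delta_k(X)) \hookrightarrow \mathrm{Ker}(\delta_k(X)) \twoheadrightarrow \widetilde{H}^k(X)$, is injective (its kernel meets $\mathrm{Im}(\delta_{k-1}(X))$ only at $0$ by orthogonality) and surjective (every class has a harmonic representative). There is no genuine obstacle here — the argument is purely linear-algebraic and finite-dimensional. The only point requiring mild care is $k = 0$, where one uses the convention $C^{-1}(X;\bR) = \bR$ with $\delta_{-1}(X)$ producing the constant cochains, so that quotienting by $\mathrm{Im}(\delta_{-1}(X))$ gives the \emph{reduced} zeroth cohomology; connectedness of $G_X$ is what makes this the right notion but plays no deeper role in the proof.
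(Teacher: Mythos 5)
Your argument is correct: the identity $\langle \Delta_k(X)\phi,\phi\rangle = \|\delta_{k-1}^{\ast}(X)\phi\|^2 + \|\delta_k(X)\phi\|^2$ identifies $\mathrm{Ker}\,\Delta_k(X)$ with $\mathrm{Ker}\,\delta_k(X)\cap\mathrm{Ker}\,\delta_{k-1}^{\ast}(X)$, and the decomposition $\mathrm{Ker}\,\delta_k(X)=\mathrm{Im}\,\delta_{k-1}(X)\oplus\bigl(\mathrm{Ker}\,\delta_k(X)\cap\mathrm{Ker}\,\delta_{k-1}^{\ast}(X)\bigr)$, which follows from $\mathrm{Ker}(T^{\ast})=\mathrm{Im}(T)^{\perp}$ together with $\delta_k(X)\delta_{k-1}(X)=0$, gives the isomorphism with the quotient $\widetilde{H}^k(X)$. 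The paper itself offers no proof --- it simply recalls this as the well-known simplicial Hodge theorem --- and what you have written is exactly the standard finite-dimensional Hodge-theoretic argument that underlies it, including the correct handling of the reduced case $k=0$ via the convention $C^{-1}(X;\bR)=\bR$ and the augmentation $\delta_{-1}(X)$.
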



 \section{Proofs} \label{sec:proof}
\subsection{Proofs of the results in section \ref{subsec:spectral}} \label{sec:spectralgap}
 
Throughout  this article, for any positive integer $m$, we denote the set $\{1, \ldots, m\}$ by $[m]$. Recall that, $X$ is a complex and $X'$ is a subcomplex of $X$. For two oriented simplices   $\eta \in X$ and $\tau \in \lk_X(\eta)$, $\eta\tau$ denotes their oriented union, i.e., if $\eta = [v_0, \ldots, v_k]$ and $\tau = [u_0, \ldots, u_l]$, then $\eta\tau = [v_0, \ldots, v_k, u_0, \ldots, u_l]$.
  Further, for any $k$-cochain $\phi$ of $X'$, we also consider $\phi$ as a cochain of $X$ by simply taking $\phi(\sigma) = 0$  whenever $\sigma \in X(k) \setminus X'(k)$ and  a cochain $\psi$ of $X$ can be considered  as a cochain of $X'$ by taking the restriction of $\psi$ on $X'$. 
%
%

 In the rest of the section, we shall abbreviate as follows : $\delta_k = \delta_k(X), \delta'_k =\delta_k(X'), \delta_k^{\ast} = \delta_k^{\ast}(X),\delta_k^{'\ast} = \delta_k^{\ast}(X'), \Delta_k = \Delta_k(X)$ and $ \Delta_k' = \Delta_k'(X)$.
\begin{lem} \label{l:lemma1}
For $\phi \in C^{k}(X';\bR)$
\begin{equation} \label{e:equal1}
 ||\delta_{k-1}^{\ast}\phi||^2  = ||\delta_{k-1}^{'\ast}\phi||^2.
\end{equation}
\end{lem}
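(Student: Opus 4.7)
The plan is to compute both norms directly, using the standard explicit description of the coboundary adjoint as a signed sum over cofaces, and to show the squared norms agree term by term after restricting the index set from $X(k-1)$ to $X'(k-1)$. Recall that for any complex $Y$ and any $\eta \in Y(k-1)$,
$$\delta_{k-1}^{\ast}(Y)\phi(\eta) = \sum_{\substack{\sigma \in Y(k) \\ \eta \subset \sigma}} \epsilon(\eta,\sigma)\, \phi(\sigma),$$
where $\epsilon(\eta,\sigma) = \pm 1$ is the incidence sign determined by the fixed orientations; this follows immediately by taking $\psi$ to be a basis element in the defining identity $\langle \delta_{k-1}\psi,\phi\rangle = \langle \psi, \delta_{k-1}^{\ast}\phi\rangle$. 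Since the orientation of each simplex of $X'$ is inherited from $X$, the incidence signs $\epsilon(\eta,\sigma)$ are the same whether computed with respect to $X$ or $X'$.

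First I would treat $\eta \in X'(k-1)$. For such an $\eta$, the sum defining $\delta_{k-1}^{\ast}\phi(\eta)$ ranges over all $\sigma \in X(k)$ containing $\eta$; but by the convention stated in the preamble, $\phi$ has been extended by zero on $X(k)\setminus X'(k)$, so only the terms with $\sigma \in X'(k)$ contribute. These are precisely the terms appearing in $\delta_{k-1}^{'\ast}\phi(\eta)$, with the same signs, so $\delta_{k-1}^{\ast}\phi(\eta) = \delta_{k-1}^{'\ast}\phi(\eta)$.

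Next I would show $\delta_{k-1}^{\ast}\phi(\eta) = 0$ for every $\eta \in X(k-1)\setminus X'(k-1)$. The key point is that $X'$ is a subcomplex of $X$, hence closed under taking faces: if some $\sigma \in X'(k)$ had $\eta$ as a face, then $\eta \in X'(k-1)$, a contradiction. Therefore every $\sigma \in X(k)$ containing $\eta$ lies in $X(k) \setminus X'(k)$, and $\phi(\sigma) = 0$ for all such $\sigma$ by the zero-extension convention, so the whole sum vanishes.

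Combining the two cases,
$$\|\delta_{k-1}^{\ast}\phi\|^2 = \sum_{\eta \in X(k-1)} \bigl(\delta_{k-1}^{\ast}\phi(\eta)\bigr)^2 = \sum_{\eta \in X'(k-1)} \bigl(\delta_{k-1}^{'\ast}\phi(\eta)\bigr)^2 = \|\delta_{k-1}^{'\ast}\phi\|^2.$$
There is no real obstacle here; the only care needed is in verifying that the incidence signs appearing in the adjoint formula for $X$ and for $X'$ coincide (immediate, since $X'$ inherits orientations from $X$) and in invoking face-closure of $X'$ to kill the boundary terms indexed by $\eta \notin X'(k-1)$.
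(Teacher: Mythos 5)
Your proof is correct and follows essentially the same route as the paper: write $\delta_{k-1}^{\ast}\phi(\eta)$ as an explicit (signed) sum over cofaces, observe that the zero-extension of $\phi$ kills all contributions from $X(k)\setminus X'(k)$, and that face-closure of $X'$ makes the value vanish entirely when $\eta \notin X'(k-1)$, so the two squared norms agree term by term. The paper states this more tersely (with the adjoint written as $\sum_{v \in \lk_X(\tau)}\phi(v\tau)$), but the substance is identical; your extra care with incidence signs and the case $\eta \in X(k-1)\setminus X'(k-1)$ is exactly the verification the paper leaves implicit.
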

\begin{proof} By the definition of the adjoint and coboundary operator,   it can easily verified   that  for any $\tau \in X(k-1)$,
 %
 \begin{align*}
 \delta_{k-1}^{\ast} \phi(\tau) = \sum_{v \in \lk_{X}(\tau)} \phi(v\tau).
\end{align*}
From the definition of $\phi$ on $X$, we have that


$$
\delta_{k-1}^{\ast} \phi(\sigma)= \begin{cases}
0  & \sigma \in X(k-1) \setminus X'(k-1),\\
\  \delta_{k-1}^{'\ast}\phi(\sigma)  & \sigma \in X'(k-1).\\
\end{cases}
$$

Thus, the result follows.
\end{proof}

We shall require the following simple inequality : For any real numbers $x_1, x_2, \ldots, x_n$, it holds that
\begin{align}
\sum\limits_{\{i, j\} , i \neq j} x_i x_j \leq \frac{(n-1)}{2} \sum\limits_{i=1}^{n} x_i^2.  \label{e:inequality}
\end{align}

  \begin{lem} \label{l:lemma2}
For $ \phi \in C^{k}(X';\bR)$, recalling $S_k(X,X')$ as defined in \eqref{e:Sk}, we have that 
\begin{equation} \label{e:lessequal1}
  ||\delta_k \phi||^2 - ||\delta'_k \phi||^2 \leq (k+2) S_k(X, X') ||\phi||^2.
  \end{equation}
 \end{lem}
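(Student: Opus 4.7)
The plan is to extend $\phi$ by zero to a $k$-cochain on all of $X$, expand $||\delta_k\phi||^2$ term by term, and identify its excess over $||\delta'_k \phi||^2$ as a sum over $(k+1)$-simplices in $X(k+1) \setminus X'(k+1)$. The key initial observation is that for $\tau \in X'(k+1)$ every $k$-face of $\tau$ lies in $X'(k)$, so $\delta_k \phi(\tau) = \delta'_k \phi(\tau)$; thus these terms cancel in the difference, giving
\begin{align*}
||\delta_k \phi||^2 - ||\delta'_k \phi||^2 = \sum_{\tau \in X(k+1) \setminus X'(k+1)} (\delta_k \phi(\tau))^2.
\end{align*}

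Next I would bound each term on the right. Since $\phi$ vanishes outside $X'(k)$, only faces $\tau_j$ that lie in $X'(k)$ contribute to $\delta_k\phi(\tau) = \sum_{j=0}^{k+1} (-1)^j \phi(\tau_j)$. There are at most $k+2$ such faces, so setting $y_j = (-1)^j \phi(\tau_j)$ and applying the Cauchy-Schwarz inequality (equivalently \eqref{e:inequality}, which gives $(\sum y_j)^2 \leq n \sum y_j^2$) yields
\begin{align*}
(\delta_k \phi(\tau))^2 \leq (k+2) \sum_{j : \tau_j \in X'(k)} \phi(\tau_j)^2.
\end{align*}

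The final step is a double-counting swap: summing $\phi(\tau_j)^2$ over pairs $(\tau, j)$ with $\tau \in X(k+1) \setminus X'(k+1)$ and $\tau_j \in X'(k)$ is the same as summing $\phi(\sigma)^2$ over $\sigma \in X'(k)$, weighted by the number of $\tau \in X(k+1) \setminus X'(k+1)$ that contain $\sigma$. By the definition \eqref{e:Sk} of $S_k(X, X')$ this weight is uniformly at most $S_k(X, X')$, so the double sum is at most $S_k(X,X')\,||\phi||^2$, and combined with the previous step this gives \eqref{e:lessequal1}. I do not anticipate a genuine obstacle here; the only point needing a bit of care is the alternating signs in $\delta_k \phi(\tau)$, which the crude Cauchy-Schwarz bound happily absorbs into the factor $k+2$.
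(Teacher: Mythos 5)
Your proof is correct and follows essentially the same route as the paper: both identify the difference as $\sum_{\tau \in X(k+1)\setminus X'(k+1)} (\delta_k\phi(\tau))^2$, bound each term by $(k+2)\sum_j \phi(\tau_j)^2$ (your Cauchy--Schwarz form is equivalent to the paper's expansion of the square plus the inequality \eqref{e:inequality}), and then swap the order of summation to invoke the definition of $S_k(X,X')$.
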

  
\begin{proof}
\begin{align} 
 ||\delta_k \phi||^2 - ||\delta'_k \phi||^2 & =  \sum_{\tau \in X(k+1) \setminus X'(k+1)} \Big( \delta_k \phi(\tau) \Big)^2 \nonumber\\
 & = \sum_{\tau \in X(k+1) \setminus X'(k+1)} \sum_{i=0}^{k+2} (-1)^i \phi(\tau_i) \sum_{j=0}^{k+2} (-1)^j \phi(\tau_j) \nonumber\\
 & = \sum_{\tau \in X(k+1) \setminus X'(k+1)} \Big(\sum_{i=0}^{k+2} (-1)^{2i} \phi(\tau_i)^2 +  \sum_{ i \neq j} (-1)^{i+j} \phi(\tau_i) \phi(\tau_j) \Big) \nonumber\\
 & = \sum_{\tau \in X(k+1) \setminus X'(k+1)} \Big(\sum_{i=0}^{k+2} \phi(\tau_i)^2 + 2 \sum_{ \{i, j \}, i \neq j } (-1)^{i+j} \phi(\tau_i)\phi(\tau_j)\Big) \nonumber \\
 & \leq \sum_{\tau \in X(k+1) \setminus X'(k+1)} \Big( \sum_{i=0}^{k+2} \phi(\tau_i)^2 + (k+1) \sum_{i=0}^{k+2} \phi(\tau_i)^2 \Big) \notag \label{middle}
 \end{align}
where last inequality follows from \eqref{e:inequality}. Hence, we derive that
\begin{align}
 ||\delta_k \phi||^2 - ||\delta'_k \phi||^2 & \leq (k+2) \sum_{\tau \in X(k+1) \setminus X'(k+1)} \sum_{i=0}^{k+2} \phi(\tau_i)^2 \nonumber\\
 & = (k+2)\sum_{\sigma \in X(k)} \phi(\sigma)^2| \{ \tau \in X(k+1) \setminus X'(k+1)  \ | \ \sigma \subset \tau\}| \nonumber\\
 & = (k+2)\sum_{\sigma \in X'(k)} \phi(\sigma)^2  | \{ \tau \in X(k+1) \setminus X'(k+1)  \ | \ \sigma \subset \tau\}| \nonumber\\
 & \leq (k+2) S_k(X, X') ||\phi||^2 \nonumber.
\end{align}
\end{proof}
%
We now recall the following well known minimax principle.
\begin{prop} (Minimax principle ; \cite[Corollary III.1.2 \& Exercise III.1.3]{RB}) 
\label{p:maxmin}
Let $A$ be the self-adjoint operator on inner product space $(V, \langle \ \rangle)$. Let $\lambda_{min}$  be the minimum  eigenvalue of $A$. For $0 \neq x \in V$,
$$ \lambda_{min} \leq \frac{\langle Ax, x \rangle}{\langle x, x \rangle}.$$  
\end{prop}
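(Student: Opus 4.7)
The plan is to prove the minimax inequality by a direct appeal to the spectral theorem for self-adjoint operators, followed by a coordinate computation in an orthonormal eigenbasis. Since the result is the well-known Rayleigh quotient lower bound, the proof amounts to unpacking definitions.

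First, I would invoke the spectral theorem: because $A$ is self-adjoint on the (finite-dimensional) inner product space $V$, there exists an orthonormal basis $\{e_1, \ldots, e_n\}$ of $V$ consisting of eigenvectors of $A$, with corresponding real eigenvalues that we may order as $\lambda_{min} = \lambda_1 \leq \lambda_2 \leq \cdots \leq \lambda_n$. This is the standard fact about finite-dimensional self-adjoint operators on a real (or Hermitian) inner product space.

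Next, given any nonzero $x \in V$, I would expand $x = \sum_{i=1}^{n} c_i e_i$ with $c_i = \langle x, e_i \rangle$. Since $A e_i = \lambda_i e_i$, linearity gives $Ax = \sum_{i=1}^n \lambda_i c_i e_i$. Using orthonormality of the basis, we compute
\begin{equation*}
\langle Ax, x \rangle = \sum_{i=1}^{n} \lambda_i c_i^2, \qquad \langle x, x \rangle = \sum_{i=1}^{n} c_i^2 > 0.
\end{equation*}
Then the inequality $\lambda_i \geq \lambda_{min}$ for every $i$ yields
\begin{equation*}
\langle Ax, x \rangle = \sum_{i=1}^{n} \lambda_i c_i^2 \geq \lambda_{min} \sum_{i=1}^{n} c_i^2 = \lambda_{min} \langle x, x \rangle,
\end{equation*}
and dividing by the positive quantity $\langle x, x \rangle$ produces the desired bound $\lambda_{min} \leq \langle Ax, x \rangle / \langle x, x \rangle$.

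There is really no significant obstacle here once the spectral theorem is granted; the only subtlety worth mentioning is ensuring that the basis is orthonormal (so that the Gram matrix is the identity and both inner products reduce cleanly to sums of squares), and recognizing that equality is attained by taking $x = e_1$, which confirms that the bound is tight and hence that $\lambda_{min}$ is indeed the infimum of the Rayleigh quotient. Since the paper cites \cite{RB} for this proposition, I would simply present the short orthonormal-basis computation above as justification, or alternatively omit the proof entirely and defer to the cited reference.
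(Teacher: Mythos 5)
Your proof is correct: the paper itself gives no argument for this proposition and simply defers to the cited reference (Bhatia's \emph{Matrix Analysis}), and your orthonormal-eigenbasis expansion of the Rayleigh quotient is exactly the standard justification that citation points to. The only cosmetic remark is that writing $c_i^2$ rather than $|c_i|^2$ implicitly assumes a real inner product space, which is harmless here since the paper only applies the result to the real cochain spaces $C^k(X;\mathbb{R})$.
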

\begin{proof}[Proof of Theorem \ref{t:subcomplex}.]
Let $0 \neq \phi \in C^{k}(X';\bR)$ be an eigenvector of $\Delta'_k = \Delta_k(X')$ with eigenvalue $\mu'_k = \mu_k(X')$. Using \eqref{e:equal1} and \eqref{e:lessequal1} for the first inequality below along with the definition of Laplacian and the minimax principle (Proposition \ref{p:maxmin}), we derive that
\begin{equation*} 
\begin{split}
\mu'_k ||\phi||^2 & = \langle \Delta'_k \phi, \phi \rangle = ||\delta'_k \phi||^2 + ||\delta_{k-1}^{' \ast} \phi||^2\\
& \geq ||\delta_k \phi||^2 + ||\delta_{k-1}^{\ast} \phi||^2 - (k+2)S_k(X,X') ||\phi||^2\\
& = \langle \Delta_k \phi, \phi \rangle - (k+2)S_k(X,X') ||\phi||^2\\
& \geq \mu_k ||\phi||^2 - (k+2)S_k(X,X') ||\phi||^2.
\end{split}
\end{equation*}
\end{proof}

\begin{proof}[Proof of Corollary \ref{cor:subcomplex}.]
 By applying induction on $k$ in Theorem \ref{t:abm1}, we derive that  $\mu_k(X) \geq (k+1)\mu_0(X) - kn$. Now, substituting the above bound in Theorem \ref{t:subcomplex} and using the fact that $\mu_0(X) = \lambda_2(G_X)$, we obtain
$$\mu_k(X') \geq (k+1)\lambda_2(G_X) -kn -(k+2)S_k(X,X').$$

Hence, if $\lambda_2(G_X) > \frac{kn}{k+1} + \frac{k+2}{k+1} S_k(X,X')$, then we have that $\mu_k(X') > 0$ and Proposition \ref{p:hodge} implies that $\widetilde{H}^k(X') = 0$.
 \end{proof}

For an $i$-simplex $\eta \in X$, let deg($\eta$) denote the number of $(i+1)$-simplices in $X$ which contain $\eta$. For $\phi \in C^k(X)$ and  a vertex $u \in V(X)$ define $\phi_u \in C^{k-1}(X;\bR)$ by 
$$
            \phi_u(\tau)= \begin{cases}
            \phi(u\tau)  & \text{if $u \in \lk_X(\tau) $},\\
\ 0 & \text{otherwise}.\\
                       \end{cases}
  $$

We now recall some results from \cite{ABM}, which were stated and proved for a clique complex but the same proofs also remain valid for any general simplicial complex. 

\begin{claim} \label{claim3.1} \cite[Claim $3.1$]{ABM}
For $\phi \in C^k(X;\bR)$
\begin{equation} \label{e:ABM1}
||\delta_k \phi||^2 \ = \sum_{\sigma \in X(k)} \mbox{deg}(\sigma) \phi(\sigma)^2 - 2 \sum_{\eta \in X(k-1)} \sum_{vw\in \lk_X(\eta)} \phi(v\eta)\phi(w\eta).
\end{equation}
\end{claim}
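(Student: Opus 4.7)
The plan is to expand $\|\delta_k\phi\|^2$ directly from the definitions of the standard inner product and the coboundary, and then reorganize the cross-terms by the codimension-two face shared by the two $k$-faces of a $(k+1)$-simplex.

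First, I would write
\[
\|\delta_k\phi\|^2 = \sum_{\tau \in X(k+1)} \bigl(\delta_k\phi(\tau)\bigr)^2 = \sum_{\tau \in X(k+1)} \sum_{i,j=0}^{k+1} (-1)^{i+j}\phi(\tau_i)\phi(\tau_j),
\]
and split into the diagonal ($i=j$) and off-diagonal ($i\neq j$) contributions. For the diagonal part, each term $\phi(\tau_i)^2$ depends only on the $k$-face $\sigma = \tau_i$, and swapping the summation order — noting that every $\sigma \in X(k)$ is a face of exactly $\deg(\sigma)$ simplices of $X(k+1)$ — yields $\sum_{\sigma \in X(k)}\deg(\sigma)\phi(\sigma)^2$, which is the first term of \eqref{e:ABM1}.

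For the off-diagonal part, I would reindex by pairs consisting of a $(k-1)$-simplex and a $1$-simplex in its link. Fix $\tau = [v_0,\ldots,v_{k+1}] \in X(k+1)$ and indices $0 \leq i < j \leq k+1$, and let $\eta$ be the oriented $(k-1)$-simplex obtained from $\tau$ by removing $v_i$ and $v_j$ while keeping the remaining vertices in their original order. A parity count gives $\tau_i = (-1)^{j-1}\,v_j\eta$ (because $v_j$ sits at position $j-1$ in $\tau_i$, and needs $j-1$ adjacent transpositions to reach the front) and similarly $\tau_j = (-1)^{i}\,v_i\eta$. Hence
\[
(-1)^{i+j}\phi(\tau_i)\phi(\tau_j) = (-1)^{i+j}(-1)^{j-1}(-1)^{i}\phi(v_j\eta)\phi(v_i\eta) = -\phi(v_i\eta)\phi(v_j\eta).
\]
Writing $\sum_{i\neq j} = 2\sum_{i<j}$ and then reparametrizing the pair $(\tau,\{i,j\})$ as $(\eta,\{v_i,v_j\})$, with $\{v_i,v_j\}$ a $1$-simplex of $\lk_X(\eta)$, converts the off-diagonal sum into $-2\sum_{\eta \in X(k-1)}\sum_{vw\in\lk_X(\eta)}\phi(v\eta)\phi(w\eta)$, matching the second term of \eqref{e:ABM1}.

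The main obstacle is the sign bookkeeping in the reindexing step: one must carefully verify that the combined sign $(-1)^{i+j}\cdot(-1)^{j-1}\cdot(-1)^{i}$ collapses to $-1$ uniformly in $i$ and $j$, so that every off-diagonal summand comes with the same sign before being repackaged as a nonnegative-coefficient sum over $(\eta, \{v,w\})$. Once this sign simplification is in hand, the remainder is a straightforward change of variables and independent of whether $X$ is a clique complex.
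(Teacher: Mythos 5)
Your proof is correct: the sign computation $(-1)^{i+j}(-1)^{j-1}(-1)^{i}=-1$ is right, the reindexing of off-diagonal pairs by $(\eta,\{v,w\})$ with $vw\in\lk_X(\eta)$ is a genuine bijection, and the orientation-independence of $\phi(v\eta)\phi(w\eta)$ makes the fixed-orientation bookkeeping harmless. The paper itself does not reprove this claim but cites \cite{ABM}, whose argument is the same direct expansion of $\|\delta_k\phi\|^2$ that you carried out, and as you note it never uses the clique-complex hypothesis, which is exactly why the paper can invoke it for general complexes.
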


\begin{claim} \label{claim3.3} \cite[Claim $3.3$]{ABM}
For $\phi \in C^k(X;\bR)$
\begin{equation} \label{e:ABM3}
\sum_{u \in V(X)} ||\delta_{k-2}^{\ast} \phi_u||^2 = k ||\delta_{k-1}^{\ast} \phi||^2.
\end{equation}

\end{claim}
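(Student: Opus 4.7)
The plan is to prove the identity by expanding both sides as sums over $k$-simplices weighted by products of values of $\phi$, and then matching the resulting terms via a $k$-to-$1$ reindexing of the summation.

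First, using the description of the coadjoint given at the start of the proof of Lemma~\ref{l:lemma1} together with the definition of $\phi_u$, one computes that for every $\eta \in X(k-2)$,
$$\delta_{k-2}^{\ast}\phi_u(\eta) \;=\; \sum_{v \in \lk_X(\eta)} \phi_u(v\eta) \;=\; \sum_{v \,:\; uv\eta \in X(k)} \phi(uv\eta).$$
Squaring this, summing over $u \in V(X)$ and $\eta \in X(k-2)$, and analogously expanding the right-hand side, the claim reduces to showing that
$$\sum_{u,\eta,\,v_1,v_2 \,:\, uv_i\eta \in X(k)} \phi(uv_1\eta)\phi(uv_2\eta) \;=\; k \sum_{\tau,\,v_1,v_2 \,:\, v_i\tau \in X(k)} \phi(v_1\tau)\phi(v_2\tau),$$
where $\tau$ runs over $X(k-1)$.

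Second, I would set up the reindexing $(u,\eta) \mapsto \tau := u\eta$. For each fixed $\tau \in X(k-1)$, this map has exactly $k$ preimages: choose any of the $k$ vertices of $\tau$ to play the role of $u$, letting $\eta$ be the remaining $(k-2)$-face equipped with its fixed orientation. This is what produces the factor of $k$ on the right-hand side. To match the summands, I would observe that the ordered tuples $(u,v_i,\eta)$ and $(v_i,u,\eta) = (v_i,\tau)$ differ by a single transposition of the first two vertices, so $\phi(uv_i\eta) = -\phi(v_i\tau)$ for $i = 1,2$; the two sign flips cancel in the product, yielding $\phi(uv_1\eta)\phi(uv_2\eta) = \phi(v_1\tau)\phi(v_2\tau)$.

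The main obstacle is the orientation-sign bookkeeping, namely checking that the sign conversion $\phi(uv_i\eta) = \pm\phi(v_i\tau)$ is independent of $i$ (so that the two signs cancel in the quadratic form) and that it remains consistent across all $k$ choices of $u$ within a given $\tau$. Both reduce to the fact that swapping the first two entries of an ordered simplex is a single transposition, regardless of which vertex of $\tau$ is selected as $u$ and regardless of the fixed orientation chosen on $\eta$. Since the argument uses only the simplicial structure and no clique hypothesis, the proof of \cite[Claim~3.3]{ABM} carries over verbatim to arbitrary simplicial complexes.
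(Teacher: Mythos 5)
Your proposal is correct and is essentially the standard argument: the paper itself does not reprove this claim but imports it from \cite{ABM} (noting the proof never uses the clique hypothesis), and the proof there is exactly your computation — expand $\delta_{k-2}^{\ast}\phi_u(\eta)=\sum_{v:\,uv\eta\in X(k)}\phi(uv\eta)$, square, sum over $u$, and reindex pairs $(u,\eta)$ by $\tau=u\eta\in X(k-1)$, which is $k$-to-$1$. The only point to state a bit more carefully is the sign: $uv_i\eta$ equals $\pm\, v_i\tau$ where the sign is the product of the transposition sign and the sign $\epsilon$ comparing the orientation $u\eta$ with the fixed orientation of $\tau$; since this sign depends only on $(u,\eta)$ and not on $v_i$ (and need not be the same for different choices of $u$), it cancels in the product $\phi(uv_1\eta)\phi(uv_2\eta)=\phi(v_1\tau)\phi(v_2\tau)$, exactly as you indicate.
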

\begin{claim} \label{claim3.2} \cite[page $7$, upto second equality in the proof of Claim $3.2$]{ABM}
\begin{equation}\label{e:ABM2}
\begin{split}
\sum_{u \in V(X)} ||\delta_{k-1} \phi_u||^2 & =  \sum_{\sigma \in X(k)} \Big(\sum_{\tau \in \sigma(k-1)} \mbox{deg}(\tau)\Big) \phi(\sigma)^2 \\
& \ \ \ \ - 2 \sum_{\eta \in X(k-2)} \sum_{vw \in \lk_X(\eta)} \sum_{u\in \lk_X(v\eta) \cap \lk_X(w\eta)} \phi(vu\eta)\phi(wu\eta).
\end{split}
\end{equation}
\end{claim}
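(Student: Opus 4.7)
The plan is to apply Claim~\ref{claim3.1} (the Laplacian decomposition identity) to the $(k-1)$-cochain $\phi_u$ for each vertex $u \in V(X)$, and then sum the resulting identities over $u$. At cochain-degree $k-1$, Claim~\ref{claim3.1} reads
\begin{equation*}
||\delta_{k-1} \phi_u||^2 = \sum_{\sigma' \in X(k-1)} \mbox{deg}(\sigma')\, \phi_u(\sigma')^2 - 2 \sum_{\eta \in X(k-2)} \sum_{vw \in \lk_X(\eta)} \phi_u(v\eta) \phi_u(w\eta),
\end{equation*}
so that summing over $u \in V(X)$ produces two sums; I will rewrite each of these to match the corresponding term on the right-hand side of \eqref{e:ABM2}.

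For the first sum I interchange the order of summation. By definition $\phi_u(\sigma')$ is nonzero only when $u \in \lk_X(\sigma')$, in which case it equals $\phi(u\sigma')$. The pairs $(\sigma', u)$ with $\sigma' \in X(k-1)$ and $u \in \lk_X(\sigma')$ biject with pairs $(\sigma, u)$ such that $\sigma \in X(k)$ and $u \in \sigma$, via $\sigma = u\sigma'$ and $\sigma' = \sigma \setminus \{u\}$; note that $\phi(u\sigma')^2 = \phi(\sigma)^2$ since the orientation sign is squared away. The $(k-1)$-faces of a given $\sigma \in X(k)$ are exactly the simplices $\sigma \setminus \{u\}$ for $u \in \sigma$, so re-indexing yields $\sum_{\sigma \in X(k)} \big(\sum_{\tau \in \sigma(k-1)} \mbox{deg}(\tau)\big) \phi(\sigma)^2$, which is the first term of \eqref{e:ABM2}.

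For the second sum I again interchange summation. The product $\phi_u(v\eta)\phi_u(w\eta)$ vanishes unless $u \in \lk_X(v\eta) \cap \lk_X(w\eta)$, in which case it equals $\phi(uv\eta)\phi(uw\eta)$. Transposing $u$ with $v$ in the first factor and $u$ with $w$ in the second each introduces one sign flip, so the two cancel and $\phi(uv\eta)\phi(uw\eta) = \phi(vu\eta)\phi(wu\eta)$. Reordering the sums then gives exactly the second term of \eqref{e:ABM2}. The main subtlety is the sign bookkeeping inherent in working with oriented simplices, together with verifying that the bijection used in the first sum correctly accounts for the multiplicity (each $k$-simplex $\sigma$ arises from exactly the $k+1$ pairs $(\sigma \setminus \{u\}, u)$ with $u \in \sigma$, collectively contributing $\sum_{\tau \in \sigma(k-1)} \mbox{deg}(\tau)\, \phi(\sigma)^2$).
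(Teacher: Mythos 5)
Your proof is correct and follows the same route as the source the paper cites for this claim: apply Claim \ref{claim3.1} in degree $k-1$ to each $\phi_u$, sum over $u$, and re-index, with the orientation signs cancelling in pairs exactly as you note. No gaps.
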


 \begin{proof}[Proof of Theorem \ref{t:general}] 
 		Let $0 \neq \psi \in C^k(X;\bR)$ be an eigenvector of $\Delta_k$   with eigenvalue $\mu_k(X)$. By double counting 
 	\begin{align} \label{e:doublecounting}
 	\sum\limits_{v \in V(X)} ||\psi_v||^2 = (k+1)||\psi||^2.
 	\end{align}
 	
 	We first derive an expression for $\sum\limits_{u \in V(X)} \langle \Delta_{k-1} \psi_u, \psi_u \rangle$. We shall use \eqref{e:ABM2} in the second equality below. 
 	\begin{equation} \label{e:ABMequation1}
 	\begin{split}
 	\sum\limits_{u \in V(X)} \langle \Delta_{k-1} \psi_u, \psi_u \rangle  & = \sum\limits_{u \in V(X)}(||\delta_{k-1} \psi_u||^2 + ||\delta_{k-2}^{\ast}\psi_u||^2)\\
 	& =  \sum_{u \in V(X)}||\delta_{k-2}^{\ast}\psi_u||^2 + \sum_{\sigma \in X(k)} \Big(\sum_{\tau \in \sigma(k-1)} \mbox{deg}(\tau)\Big)\psi(\sigma)^2\\
 	& \ \ - 2 \sum_{\eta \in X(k-2)} \sum_{vw \in \lk_X(\eta)} \sum_{u \in \lk_X(v \eta) \cap \lk_X(w \eta)} \psi(vu \eta)\psi(wu \eta).
 	\end{split}
 	\end{equation}
Now, we relate $\sum\limits_{u \in V(X)} \langle \Delta_{k-1} \psi_u, \psi_u \rangle$ to $k\langle \Delta_k \psi, \psi \rangle$. In the following derivation, we shall use \eqref{e:ABM1} and \eqref{e:ABM3} for the second equality and the third equality will follow from \eqref{e:ABMequation1}.
 	\begin{equation*}
 	\begin{split}
 	k\langle \Delta_k \psi, \psi \rangle  & = k(||\delta_{k} \psi||^2 + ||\delta_{k-1}^{\ast}\psi||^2) \\
 	& = k\Big(\sum_{\sigma \in X(k)}\mbox{deg}(\sigma)\psi(\sigma)^2-2\sum_{\eta\in X(k-1)} \sum_{vw \in \lk_X(\eta)} \psi(v\eta)\psi(w\eta)\Big)\\
 	& ~~~~ + \sum_{u \in V(X)} ||\delta_{k-2}^{\ast}\psi_u||^2\\
 	& = k\sum_{\sigma \in X(k)}\mbox{deg}(\sigma)\psi(\sigma)^2 - 2k\sum_{\eta\in X(k-1)} \sum_{vw \in \lk_X(\eta)} \psi(v\eta)\psi(w\eta)\\
 	& ~~~~ + \sum_{u \in V(X)} \langle \Delta_{k-1}\psi_u, \psi_u\rangle  - \sum_{\sigma \in X(k)}\Big( \sum_{\tau \in \sigma(k-1)} \mbox{deg}(\tau)\Big) \psi(\sigma)^2 \\
 	& ~~~~ + 2 \sum_{\eta \in X(k-2)} \sum_{vw \in \lk_X(\eta)} \sum_{u \in \lk_X(v\eta) \cap \lk_X(w\eta)} \psi(vu\eta)\psi(wu\eta).
 	\end{split}
 	\end{equation*}
 	Thus, from the previous two derivations, we obtain that
 	$$ k \langle \Delta_k \psi, \psi \rangle  = \sum\limits_{u \in V(X)} \langle \Delta_{k-1}\psi_u, \psi_u \rangle + I_1-I_2-T,$$
 	where 
 	\begin{align}
 	T & := \sum_{\sigma \in X(k)}\Big( \sum_{\tau \in \sigma(k-1)} \mbox{deg}(\tau) - k \ \mbox{deg}(\sigma)\Big) \psi(\sigma)^2, \label{e:T}\\ 
 	I_1 & :=  2 \sum_{\eta \in X(k-2)} \sum_{vw \in \lk_X(\eta)} \sum_{u \in \lk_X(v\eta) \cap \lk_X(w\eta)} \psi(vu\eta)\psi(wu\eta) \label{e:I1}
 	\end{align}
 	and
 	\begin{align}
 	I_2 & := 2k\sum_{\eta\in X(k-1)} \sum_{vw \in \lk_X(\eta)} \psi(v\eta)\psi(w\eta). \label{e:I2}
 	\end{align}
We now use the bounds for $|I_1 -I_2|$ and $T$ given in  Claims \ref{i1i2} and \ref{T1} which we prove at the end of this section.  Combining Claims \ref{i1i2} and \ref{T1}, we have the following.
 \begin{align}
 k\langle\Delta_k \psi, \psi \rangle & \geq \sum_{v \in V(X)} \langle\Delta_{k-1}\psi_v, \psi_v\rangle -(|V(X)|+ \sum\limits_{j=2}^{k+1} D_k(X,j) (k(k+1)+j)) ||\psi||^2. \label{e:Deltak}
 \end{align}
  
From  \eqref{e:Deltak}, Minimax principle (Proposition \ref{p:maxmin}) and \eqref{e:doublecounting} we have 
\begin{align*}
k \mu_k(X) ||\psi||^2 & = k\langle\Delta_k \psi, \psi\rangle  \geq \sum_{v \in V(X)} \langle \Delta_{k-1}\psi_v, \psi_v \rangle-(n+ \sum\limits_{j=2}^{k+1}(k(k+1)+j) D_k(X,j)) ||\psi||^2 \\
& \geq \mu_{k-1}(X) \sum_{v \in V(X)} ||\psi_v||^2 -  (n+ \sum\limits_{j=2}^{k+1}(k(k+1)+j) D_k(X,j)) ||\psi||^2 \\
& = ((k+1) \mu_{k-1}(X) - n -  \sum\limits_{j=2}^{k+1} (k(k+1)+j)D_k(X,j)) ||\psi||^2.
\end{align*}
\end{proof}
 
\begin{proof}[Proof of Corollary \ref{cor:general}] Since the $k$-skeleton of $X$ is same as that of the clique complex of $G_X$, we have from Remark \ref{r:Dk} that $D_i(X,i) = 0$ for all $2 \leq i \leq k$. Hence, Theorem \ref{t:general} implies that $\mu_k(X) \geq (k+1) \mu_0(X) - kn - (k+1)^2 D_k(X,k+1)$. Therefore, if $\mu_0(X) = \lambda_2(G_X) > \frac{kn}{k+1} + (k+ 1) D_k(X,k+1)$, then $\mu_k(X) > 0$ and result follows from Proposition \ref{p:hodge}.
\end{proof}
 
In the rest of the paper, we use $ \1[ \  \cdot \ ]$ to denote an indicator function. Now, we shall give proofs of Claims \ref{i1i2} and \ref{T1}. For a $k$-simplex $\sigma$  and  $v_1, \ldots, v_l \in \sigma$, $\hat{\sigma}_{v_1 \ldots v_l} := \sigma \setminus \{v_1, \ldots, v_l\}$ is a ($k-l$)-simplex. Also, for simplices $\sigma$ and $\eta$, by $\sigma \triangle \eta$ we refer to their symmetric difference as sets of vertices.
Recall that the definitions of $D_i(X, j), i \geq 1, 1 \leq j \leq i+1$ are given in \eqref{e:Dkj} and  the definitions of $T, I_1$ and $I_2$ can be found in \eqref{e:T}, \eqref{e:I1} and \eqref{e:I2} respectively.
 
 
 \begin{claim} \label{i1i2}
 	\begin{equation} \label{I2I1}
 	|I_1-I_2| \leq k(k+1) \sum\limits_{j=2}^{k+1} D_k(X,j) ||\psi||^2.
 	\end{equation}
 	
 \end{claim}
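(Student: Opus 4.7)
I re-express both $I_1$ and $I_2$ as sums over ordered pairs $(\sigma_1,\sigma_2)$ of distinct $k$-simplices sharing a common $(k-1)$-face $\tau:=\sigma_1\cap\sigma_2$, show that their contributions cancel on ``good'' pairs (those with $\sigma_1\cup\sigma_2\in X(k+1)$), and bound the residual bad-pair contribution using $D_k(X,j)$. In $I_2$, the summand $\psi(v\eta)\psi(w\eta)$ sets $\sigma_1=v\eta$, $\sigma_2=w\eta$, and the condition $vw\in\lk_X(\eta)$ is exactly $\sigma_1\cup\sigma_2\in X(k+1)$; after converting the prefactor $2$ into ordered-pair summation, each such good pair contributes $k\,\psi(\sigma_1)\psi(\sigma_2)$. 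In $I_1$, setting $\sigma_1=vu\eta'$, $\sigma_2=wu\eta'$ makes $u\eta'=\tau$ the shared $(k-1)$-face, and the requirement $vw\in\lk_X(\eta')$ becomes $(\sigma_1\cup\sigma_2)\setminus\{u\}\in X(k)$; summing over the admissible $u\in\tau$, each ordered pair contributes $\psi(\sigma_1)\psi(\sigma_2)\cdot N(\sigma_1,\sigma_2)$, where $N(\sigma_1,\sigma_2):=|\{u\in\tau:(\sigma_1\cup\sigma_2)\setminus\{u\}\in X(k)\}|$.

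For a good pair, every $k$-subset of the $(k+1)$-simplex $\sigma_1\cup\sigma_2$ lies in $X(k)$, so $N(\sigma_1,\sigma_2)=|\tau|=k$ and the good-pair contributions to $I_1$ and $I_2$ cancel exactly. Thus $I_1-I_2$ reduces to a sum over \emph{bad} ordered pairs with $\sigma_1\cup\sigma_2\notin X(k+1)$. For such a pair, writing $a:=\sigma_1\setminus\sigma_2$ and $b:=\sigma_2\setminus\sigma_1$, we have $b\notin\lk_X(\sigma_1)$, while the condition $(\sigma_1\cup\sigma_2)\setminus\{u\}\in X(k)$ for $u\in\tau$ is equivalent to $b\in\lk_X(\sigma_1\setminus\{u\})$. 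Setting $j(\sigma_1,b):=|\{v\in\sigma_1:b\in\lk_X(\sigma_1\setminus\{v\})\}|$, the value $v=a$ always qualifies (because $\sigma_2\in X(k)$), giving $N(\sigma_1,\sigma_2)=j(\sigma_1,b)-1$. Moreover, by \eqref{e:Dkj}, for each $j\geq 1$ the number of vertices $b\notin\lk_X(\sigma_1)$ with $j(\sigma_1,b)=j$ is at most $D_k(X,j)$.

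To finish, I apply the triangle inequality together with $|\psi(\sigma_1)\psi(\sigma_2)|\leq\tfrac12(\psi(\sigma_1)^2+\psi(\sigma_2)^2)$; by symmetry of the bad-pair relation in ordered sums this gives $|I_1-I_2|\leq\sum_{\sigma_1\in X(k)}\psi(\sigma_1)^2\sum_{\sigma_2\,:\,(\sigma_1,\sigma_2)\text{ bad}}(j(\sigma_1,b)-1)$. For fixed $\sigma_1$ and $b\notin\lk_X(\sigma_1)$ with $j(\sigma_1,b)=j$, the number of admissible $a\in\sigma_1$ producing a bad partner $\sigma_2$ is exactly $j$, so the inner sum equals $\sum_{j\geq 2}(\text{count of such }b)\,j(j-1)\leq k(k+1)\sum_{j=2}^{k+1}D_k(X,j)$, using $j(j-1)\leq k(k+1)$ for $2\leq j\leq k+1$. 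Multiplying by $\sum_{\sigma_1}\psi(\sigma_1)^2=\|\psi\|^2$ yields the claim. The main delicate point is the very first step: the re-indexing of $I_1$ from $(\eta',\{v,w\},u)$ to $(\sigma_1,\sigma_2)$ carries orientation signs relating $\psi(vu\eta')$ to $\pm\psi(\sigma_1)$ (and similarly for $\sigma_2$), but since the estimate passes through absolute values via the triangle inequality, these signs are harmless.
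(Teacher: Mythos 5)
Your proof is correct, and while its opening move coincides with the paper's, the bounding half takes a genuinely different route. The cancellation step is the same in substance: the paper folds the factor $2k$ of $I_2$ into the $I_1$-type indexing (writing each $(k-1)$-simplex as $u\eta$ in $k$ ways), so that $I_1-I_2$ is supported exactly on configurations with $u\notin\lk_X(vw\eta)$, which is your ``bad pair'' condition $\sigma_1\cup\sigma_2\notin X(k+1)$; your observation $N(\sigma_1,\sigma_2)=k$ for good pairs is the same fact in ordered-pair language. After that the paper re-indexes over $(k+2)$-sets not in $X(k+1)$ so as to apply \eqref{e:inequality} (the source of its factor $k$), and then needs the somewhat delicate symmetric-difference argument to transfer the ``exactly $j$'' condition from the pair $(\sigma,w)$ with $w\notin\lk_X(\sigma)$ to the pair $(\eta,v)$ where $\eta$ carries $\psi(\eta)^2$, after which $D_k(X,j)$ and the $k+1$ choices of $w\in\eta$ produce $k(k+1)$. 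You instead bound each bad product by $\tfrac12(\psi(\sigma_1)^2+\psi(\sigma_2)^2)$, exploit that $N(\sigma_1,\sigma_2)$ and badness are manifestly symmetric to put all the weight on $\psi(\sigma_1)^2$ (this symmetrization is what replaces the paper's transfer argument), and then count directly: each vertex $b\notin\lk_X(\sigma_1)$ with parameter $j$ contributes $j(j-1)$, which by \eqref{e:Dkj} gives the intermediate bound $\sum_{j=2}^{k+1}j(j-1)D_k(X,j)\,||\psi||^2$ — in fact slightly sharper than the claimed $k(k+1)\sum_j D_k(X,j)\,||\psi||^2$, to which you correctly relax. One small point of justification: the exact cancellation on good pairs is not explained by ``the signs pass through absolute values'' — absolute values only enter afterwards. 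What makes it exact is that the product $\psi(v\tau)\psi(w\tau)$ attached to two $k$-simplices sharing the $(k-1)$-face $\tau$ does not depend on the orientation chosen for $\tau$ (reversing it flips both factors), so the matched $I_1$ and $I_2$ terms carry the \emph{same} signed quantity; equivalently, any sign relating $\psi(vu\eta)$ to $\pm\psi(\sigma_1)$ is common to both sides and cancels. With that one-line remark inserted, your argument is complete and, in my view, cleaner than the printed one.
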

 \begin{proof} In this  proof, we use the convention that $\psi(\tau) = 0$, whenever $\tau \notin  X(k)$. 
 	Observe that the expression for $I_2$ given in \eqref{e:I2} can be rewritten as, 
 	\begin{align*}
 	I_2 = 2 \sum_{\eta \in X(k-2)}  \sum_{\substack{\{v,w\} \\ vw \in \lk_X(\eta)}} \sum_{\substack{u \in \lk_X(v\eta) \cap \lk_X(w\eta) \\ u \in \lk_X(vw\eta)}} \psi(vu\eta)\psi(wu\eta).
 	\end{align*}
 	By recalling the definition of $I_1$  from \eqref{e:I1}, we obtain,
 	\begin{align*}
 	I_1-I_2 & = 2 \sum_{\eta \in X(k-2)}  \sum_{\substack{\{v,w\} \\ vw \in \lk_X(\eta)}} \sum_{\substack{u \in \lk_X(v\eta) \cap \lk_X(w\eta) \\ u \notin \lk_X(vw\eta)}} \psi(vu\eta)\psi(wu\eta)\\
 	&= 2 \sum_{\sigma \in X(k)} \sum_{\{v, w\} \subseteq \sigma}  \sum_{\substack{u \in \lk_X(\hat{\sigma}_w) \cap \lk_X(\hat{\sigma}_v) \\ u \notin \lk_X(\sigma)}} \psi(uv\hat{\sigma}_{vw})\psi(uw\hat{\sigma}_{vw})\\
 	& = 2 \sum_{\sigma \in X(k)} \sum_{u \notin \lk_X(\sigma)} \sum_{\{v, w\} \subseteq \sigma} \1[u \in \lk_X(\hat{\sigma}_v) \cap \lk_X(\hat{\sigma}_w)]\psi(uv\hat{\sigma}_{vw})\psi(uw\hat{\sigma}_{vw}) \\
 	 &= 2 \sum_{\sigma \in X(k)} \sum\limits_{j = 2}^{k+1} \sum_{u \notin \lk_X(\sigma)} \1[u \in \bigcap\limits_{i=1}^{j} \lk_X(\hat{\sigma}_{v_i})\ \mbox{for exactly} \ j \ \mbox{vertices} \ v_1, \ldots, v_j \in \sigma]\\
 	& ~~~~~\sum_{\{v, w\} \subseteq \sigma} \1[u \in \lk_X(\hat{\sigma}_v) \cap \lk_X(\hat{\sigma}_w)]\psi(uv\hat{\sigma}_{vw})\psi(uw\hat{\sigma}_{vw})\\
 & =  2 \sum_{\{v_1, \ldots, v_{k+2}\} \notin X(k+1)} \sum_{i \in [k+2]} \1[\gamma^i = \{v_1, \ldots, \hat{v_i}, \ldots, v_{k+2}\} \in X(k)] \\
 	& ~~~~~ \sum\limits_{j=2}^{k+1} \1[v_i \in \bigcap\limits_{l=1}^{j} \lk_X(\hat{\gamma^i}_{v_{i_l}}) \ \mbox{for exactly} \ j \ \mbox{vertices} \ v_{i_1}, \ldots, v_{i_j} \in \gamma^i] \\ &~~~~~  \sum_{ \{p, q\} \subset [k+2] \setminus\{i\}} \psi(v_iv_p \hat{\gamma^i}_{v_pv_q})\psi(v_iv_q\hat{\gamma^i}_{v_pv_q}) \notag.
 	\end{align*}
 	 Hence, using \eqref{e:inequality} we obtain 
 	\begin{align}
 	|I_1-I_2| & \leq 2\cdot\frac{k}{2}\sum_{\{v_1, \ldots, v_{k+2}\} \notin X(k+1)} \sum_{i \in [k+2]} \1[\gamma^i = \{v_1, \ldots, \hat{v_i}, \ldots, v_{k+2}\} \in X(k)]\nonumber\\
	& ~~~~~ \sum\limits_{j=2}^{k+1} \1[v_i \in \bigcap\limits_{l=1}^{j} 
 \lk_X(\hat{\gamma^i}_{v_{i_l}}) \ \mbox{for exactly} \ j \ \mbox{vertices} \      v_{i_1}, \ldots, v_{i_j} \in \gamma^i] \nonumber\\ 
	&~~~~~~   \sum_{p \in [k+2]\setminus\{i\}} \psi(v_i\hat{\gamma^i}_{v_{p}})^2\nonumber\\
	& = k\sum_{\sigma \in X(k)} \sum\limits_{w \notin \lk_X(\sigma)} \nonumber \\
	&~~~~~ \sum\limits_{j=2}^{k+1} \1[w \in \bigcap\limits_{l=1}^{j} \lk_X(\hat\sigma_{v_{i_l}}) \ \mbox{for exactly} \ j \ \mbox{vertices} \ v_{i_1}, \ldots, v_{i_j} \in \sigma] \sum\limits_{v \in \sigma} \psi(w\hat\sigma_{v})^2 \nonumber \\
	&= k \sum\limits_{j=2}^{k+1} \sum_{\eta \in X(k)} \psi(\eta)^2 \sum_{\sigma \in X(k)} \sum_{v,w} \1[w\hat\sigma_v \triangle \eta = \emptyset, w \notin \lk_X(\sigma),v \in \sigma] \nonumber\\
	&~~~~~ \times \1[w \in \bigcap\limits_{l=1}^{j} \lk_X(\hat\sigma_{v_{i_l}}) \ \mbox{for exactly} \ j \ \mbox{vertices} \ v_{i_1}, \ldots, v_{i_j} \in \sigma]  \nonumber\\
	& = k \sum\limits_{j=2}^{k+1} \sum_{\eta \in X(k)} \psi(\eta)^2 \sum_{\sigma \in X(k)} \sum_{v,w} \1[v\hat\eta_w \triangle  \sigma = \emptyset,v \notin \lk_X(\eta),w \in \eta] \nonumber \\
	&~~~~~ \times \1[v \in \bigcap\limits_{l=1}^{j} \lk_X(\hat\eta_{v_{i_l}}) \ \mbox{for exactly} \ j \ \mbox{vertices} \ v_{i_1}, \ldots, v_{i_j} \in \eta], \nonumber
	\end{align} 
where the last equality is justified as follows : For $\eta , \sigma \in X(k)$, we have that $w \notin \lk_{X}(\sigma), v \in \sigma, w \hat{\sigma}_v  \triangle \eta = \emptyset$ is equivalent to $v \hat{\eta}_{w} \triangle \sigma = \emptyset, w \in \eta, v \notin \lk_X(\eta)$. Now, let $w \in \bigcap\limits_{l=1}^{j} \lk_X(\hat\sigma_{v_{i_l}})$  for exactly $  j$ vertices $ v_{i_1}, \ldots, v_{i_j} \in \sigma$. Clearly, $v \in \{v_{i_1}, \ldots, v_{i_l}\}$ and without loss of generality, assume $v = v_{i_1}$. Thus, $v \in \lk_{X}(\hat{\eta}_{w}) \cap  \bigcap\limits_{j=2}^{l} \lk_X(\hat\eta_{v_{i_j}} )$. Suppose there exists $u \in \eta, u \notin \{v_{i_1}, \ldots, v_{i_j}, w\}$ such that $v \in \lk_{X}(\hat{\eta}_{u})$. Since, $(\hat{\eta}_u \cup \{v\}) \triangle (\hat{\sigma}_{u} \cup \{w\}) = \emptyset$, we conclude that $w \in \lk_{X}(\hat{\sigma}_{u})$, which contradicts the fact that $w  \in \bigcap\limits_{l=1}^{j} \lk_X(\hat\eta_{v_{i_l}})$ for exactly $j $ vertices $ v_{i_1}, \ldots, v_{i_j} \in \sigma$. 
	
	Thus we have, 
	\begin{align}
|I_1-I_2| & \leq 	  k \sum\limits_{j=2}^{k+1} \sum_{\eta \in X(k)} \psi(\eta)^2 \sum_{v,w} \sum_{\sigma \in X(k)} \1[v\hat\eta_w \triangle \sigma = \emptyset]\1[v \notin \lk_X(\eta),w \in \eta] \nonumber\\
	&~~~~~ \times \1[v \in \bigcap\limits_{l=1}^{j} \lk_X(\hat\eta_{v_{i_l}}) \ \mbox{for exactly} \ j \ \mbox{vertices} \ v_{i_1}, \ldots, v_{i_j} \in \eta] \nonumber\\
	&	 = k \sum\limits_{j=2}^{k+1} \sum_{\eta \in X(k)} \psi(\eta)^2 \sum_{w}\1[w \in \eta] \sum_v \1[v \notin \lk_X(\eta)] \nonumber\\
	&~~~~~ \times \1[v \in \bigcap\limits_{l=1}^{j} \lk_X(\hat\eta_{v_{i_l}}) \ \mbox{for exactly} \ j \ \mbox{vertices} \ v_{i_1}, \ldots, v_{i_j} \in \eta]\nonumber \\
	&	 \leq  k \sum\limits_{j=2}^{k+1} D_k(X,j) \sum_{\eta \in X(k)} \psi(\eta)^2 \sum_{w}\1[w \in \eta]  \nonumber\\	
	& = k(k+1)\sum\limits_{j=2}^{k+1} D_k(X,j) \sum_{\eta\in X(k)} \psi(\eta)^2   \nonumber \\
 	& = k(k+1)\sum\limits_{j=2}^{k+1} D_k(X,j)||\psi||^2 .\nonumber
 	\end{align}
  \end{proof}

\begin{claim} \label{T1}
 	\begin{align} \label{T}
 	T & \leq (|V(X)|+  \sum\limits_{j=2}^{k+1} j D_k(X,j)) ||\psi||^2.
 	\end{align}
 \end{claim}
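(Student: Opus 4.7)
The plan is to bound the coefficient
$$T(\sigma) := \sum_{\tau \in \sigma(k-1)} \deg(\tau) - k\, \deg(\sigma)$$
uniformly for each fixed $\sigma \in X(k)$, and then sum against $\psi(\sigma)^2$. Writing each $(k-1)$-face of $\sigma$ as $\hat{\sigma}_v = \sigma \setminus \{v\}$ and using that $\deg(\hat{\sigma}_v)$ equals the number of vertices $u$ with $\hat{\sigma}_v \cup \{u\} \in X$, i.e.\ $u \in \lk_X(\hat{\sigma}_v) \cap X(0)$, a double count gives
$$\sum_{\tau \in \sigma(k-1)} \deg(\tau) \; = \; \sum_{v \in \sigma} |\lk_X(\hat{\sigma}_v) \cap X(0)| \; = \; \sum_{u \in V(X)} \bigl|\{v \in \sigma : u \in \lk_X(\hat{\sigma}_v)\}\bigr|.$$

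Next I would partition $V(X)$ according to how many $v \in \sigma$ satisfy $u \in \lk_X(\hat{\sigma}_v)$. If $u \in \sigma$, only the choice $v = u$ works (any other $v \in \sigma$ has $u \in \hat{\sigma}_v$), so these vertices contribute $|\sigma| = k+1$ in total. If $u \in \lk_X(\sigma) \cap X(0)$, every $v \in \sigma$ works, since $\hat{\sigma}_v \cup \{u\} \subseteq \sigma \cup \{u\} \in X$; this contributes $(k+1)\deg(\sigma)$. For $u \notin \sigma \cup \lk_X(\sigma)$, let $N_j(\sigma)$ denote the number of such $u$ for which exactly $j$ vertices of $\sigma$ satisfy the link condition. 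Subtracting $k\, \deg(\sigma)$ yields
$$T(\sigma) \;=\; (k+1) + \deg(\sigma) + \sum_{j=1}^{k+1} j\, N_j(\sigma).$$

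The crucial observation is that the three quantities $k+1 = |\sigma|$, $\deg(\sigma) = |\lk_X(\sigma) \cap X(0)|$, and $N_1(\sigma)$ count three \emph{pairwise disjoint} subsets of $V(X)$, so their sum is at most $|V(X)| = n$. For $j \geq 2$, the requirement $u \notin \sigma$ is automatic (since any $u \in \sigma$ forces $j=1$), so comparing with the definition \eqref{e:Dkj} yields $N_j(\sigma) \leq D_k(X,j)$. Combining gives $T(\sigma) \leq |V(X)| + \sum_{j=2}^{k+1} j\, D_k(X,j)$ uniformly in $\sigma$, and multiplying by $\psi(\sigma)^2$ and summing over $\sigma \in X(k)$ produces the claim since $\|\psi\|^2 = \sum_{\sigma \in X(k)} \psi(\sigma)^2$. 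The main step requiring care is the handling of $j = 1$: the naive estimate $N_1(\sigma) \leq D_k(X,1)$ would be wasteful, and one must instead exploit the disjointness of $\sigma$, $\lk_X(\sigma) \cap X(0)$, and the vertices counted by $N_1(\sigma)$ in order to absorb all three into $|V(X)|$.
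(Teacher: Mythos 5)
Your proof is correct and follows essentially the same route as the paper: partition the vertices $u$ according to how many $v\in\sigma$ satisfy $u\in\lk_X(\hat{\sigma}_v)$, absorb $\deg(\sigma)$ together with the $j\le 1$ contributions into $|V(X)|$ by disjointness, and bound the $j\ge 2$ counts by $D_k(X,j)$. The only cosmetic difference is that you list the vertices of $\sigma$ as a separate block of size $k+1$, whereas the paper folds them into the ``exactly one $v$'' class; the resulting estimate is identical.
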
  
 
 \begin{proof}
 	\begin{align*}
 	T & = \sum_{\sigma \in X(k)} \Big(\sum_{\tau \in \sigma(k-1)} \mbox{deg}(\tau) - k \  \mbox{deg}(\sigma) \Big) \psi(\sigma)^2 \\
 	& = \sum_{\sigma \in X(k)} \Big(\sum_{v \in \sigma} \sum_{u \in \lk_X(\hat{\sigma}_v)} \psi(\sigma)^2 - k \sum_{u \in \lk_X(\sigma)} \psi(\sigma)^2 \Big)\\ 
 	& = \sum_{\sigma \in X(k)} \sum_{u \in \lk_X(\sigma)} \psi(\sigma)^2 + \sum_{\sigma \in X(k)} \sum_{v \in \sigma}\sum_{\substack{u \in \lk_X(\hat{\sigma}_v) \\ u \notin \lk_X(\sigma)}} \psi(\sigma)^2 \\ 
 		& = \sum_{\sigma \in X(k)} \sum_{u \in \lk_{X}(\sigma)} \psi(\sigma)^2 + \sum_{\sigma \in X(k)} \Big( \sum_{v\in \sigma} |\{u | u \notin \lk_X(\sigma) \ \text{and} \ u \in \lk_X(\hat{\sigma}_v)\}| \Big) \psi(\sigma)^2\\
 	& = T_1 + T_2
 	\end{align*}
 	where, 
 	\begin{align}
 	T_1	& := \sum_{\sigma \in X(k)} \Big(\sum_{u \in \lk_{X}(\sigma)} \psi(\sigma)^2 + |\{ u| u \notin \lk_X(\sigma), u \in \lk_X(\hat{\sigma}_v) \ \mbox{for exactly one} \  v \in \sigma\}| \psi(\sigma)^2\Big) \nonumber\\
 	& \leq |V(X)| \sum_{\sigma \in X(k)} \psi(\sigma)^2 = |V(X)| ||\psi||^2 \nonumber
 	\end{align}
 	and
 	\begin{align}
 	T_2 & := \sum_{\sigma \in X(k)} \sum\limits_{j=2}^{k+1} j | \{u |  u \notin \lk_X(\sigma), u \in \bigcap\limits_{i=1}^{j} \lk_X(\hat{\sigma}_{v_i})\ \mbox{for exactly} \ j \ \mbox{vertices} \ v_1, \ldots, v_j \in \sigma \}| \psi(\sigma)^2 \nonumber\\
 	& \leq \sum_{\sigma \in X(k)} \sum\limits_{j=2}^{k+1}  jD_k(X,j)\psi(\sigma)^2 \nonumber\\
 	& \leq  \sum\limits_{j = 2}^{k+1}j D_k(X,j)  ||\psi||^2 \nonumber.
\end{align}
\end{proof}
 
 
 \subsection{Proofs of the results in section \ref{subsec:randomneighborhood}}
 \label{sec:random}
\begin{proof}[Proof of Theorem \ref{t:main}.] 
Let $p = \Big(\frac{(k+1)\log n  +c_n}{n}\Big)^{\frac{1}{k+2}}$, where  $c_n \to \infty$. The two main steps of the proof are (i) $\mathcal{N}(G(n, p))$ has full $k$-skeleton and (ii) $n^{-1}\mathbb{E}(D_k(\N(G(n,p)),k+1)) \to 0$. 
  
From (ii) and Markov's inequality, we have that for all $\epsilon > 0$, $\bP(D_k(\N(G(n,p)),k+1) \geq \epsilon \frac{ n}{(k+1)^2}) \to 0$ and hence w.h.p. $D_k(\N(G(n,p)),k+1) < \frac{ n}{(k+1)^2}$. Let $G$ be the $1$-skeleton of $\mathcal{N}(G(n, p))$. Since $G$ is a complete graph w.h.p. (because of (i)), the spectral gap $\lambda_2(G) = n$ and therefore Corollary \ref{cor:general} and (i) imply that $\widetilde{H}^k(\mathcal{N}(G(n, p))) = 0$. This completes the proof provided we establish (i) and (ii).

First, we show (i). The expected number of  $(k+1)$-tuples of  vertices in $G(n, p)$  with no neighbor is
 \begin{align*} 
 {n \choose k+1}(1-p^{k+1})^{n-k-1} & \leq {n \choose k+1} e^{-(n-k-1)p^{k+1}} \\
 & = {n \choose k+1} e^{-n\big(\frac{(k+1)\log n + c_n}{n}\big)^{\frac{k+1}{k+2}}} e^{(k+1)\big(\frac{(k+1)\log n + c_n}{n}\big)^{\frac{k+1}{k+2}}}\\
 &={n \choose k+1}e^{-n^{\frac{1}{k+2}} ((k+1)\log n + c_n)^{\frac{k+1}{k+2}}}e^{(k+1)\big(\frac{(k+1)\log n + c_n}{n}\big)^{\frac{k+1}{k+2}}}\\
 & = o(1).
 \end{align*}
 Hence, w.h.p. $\mathcal{N}(G(n, p))$ has full $k$-skeleton. In particular, w.h.p. $k$-skeleton of $\N(G(n, p))$ is a clique complex. It is well known that, if a complex has full $k$-skeleton then it has trivial cohomology in all dimensions less than $k$. Therefore $\widetilde{H}^i(\mathcal{N}(G(n, p))) = 0$ for $i < k$. 
  
Now, we shall establish (ii). Let $B_k$ be the number of subcomplexes of $\N(G(n, p))$, which are isomorphic to the simplicial boundary of a $(k+1)$-simplex. Since the $k$-skeleton of $\N(G(n,p))$ is a clique complex, we observe from \eqref{def:Dk} that  $D_k(\N(G(n, p)),k+1) \leq B_k$. Thus it suffices to show that $n^{-1}\bE(B_k) \to 0$ to prove (ii).  
  
The rest of the proof is to compute $\bE(B_k)$ and show the above. For $0 \leq i \leq  { k+2 \choose 2}$, let $C_i$ denote the number of graphs on $k+2$ vertices with $i$ edges. For any $\{v_1, \ldots, v_{k+2}\} \subset [n]$, the probability that the induced subcomplex of $\N(G(n,p))$ on 
 $\{v_1, \ldots, v_{k+2}\}$ is isomorphic to the simplicial boundary of a $(k+1)$-simplex, is bounded above by  $(1-p^{k+2})^{n-k-2}$. Therefore
 \begin{align*} 
 \frac{\mathbb{E} B_k}{n}  & \leq \frac{1}{n} {n \choose k+2} \sum\limits_{i=0}^{{k+2 \choose 2}}C_i p^i(1-p)^{{k+2 \choose 2} - i} (1-p^{k+2})^{n-k-2} \\
& \leq   \frac{1}{n}{n \choose k+2} \sum\limits_{i=0}^{{k+2 \choose 2}} C_i p^i(1-p)^{{k+2 \choose 2} - i} e^{-(n-k-2)p^{k+2}}\\
 & =  \frac{1}{n}{n \choose k+2} \sum\limits_{i=0}^{{k+2 \choose 2}} C_i p^i(1-p)^{{k+2 \choose 2} - i} e^{-(n-k-2)\frac{(k+1)\log n + c_n}{n}}\\
& =   {n \choose k+2} n^{-(k+2)} e^{-c_n}  e^{(k+2)\frac{(k+1)\log n + c_n}{n}} \sum\limits_{i=0}^{{k+2 \choose 2}} C_i  (1-p)^{{k+2 \choose 2} - i} p^i.
 \end{align*}
Now using the fact that $c_n \to \infty$ and $p \to 0$, we derive that $n^{-1}\bE(B_k) \to 0$ and thereby completing the proof of (ii) as well as that of the theorem. 
\end{proof}
Let $U = \{u_1, \ldots, u_r\}, V = \{v_1, \ldots, v_r\}$ be subsets of $[n]$ such that $U \cap V = \emptyset.$ The graph $X_{U,V}$ is defined as the graph with vertex set $U \cup V$ and edges $u_i \sim u_j$ and $u_i \sim v_j$ for all $ 1 \leq i \neq j \leq r$. To prove Proposition \ref{p:extension}, we need the following result relating to cohomology to graphs isomorphic to $X_{U,V}$ graphs. For two sequences of real numbers $a_n, b_n, n \geq 1$, we shall use the notation $a_n = o(b_n)$ to denote that $a_n / b_n \to 0$ as $n \to \infty$.
 
 \begin{prop} \label{kahleprop}\cite[Theorem $2.7$]{kahle}
 If $H$ is any graph containing a maximal clique of order $r$ that cannot be extended to an $X_{U,V}$ subgraph for some $U,V$, then $\N(H)$ retracts onto a sphere $\mathbb{S}^{r-2}$.
 \end{prop}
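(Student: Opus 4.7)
The plan is to identify $\mathbb{S}^{r-2}$ with the geometric realization of $\partial C$, the boundary complex of the simplex on $C = \{u_1,\ldots,u_r\}$, and then construct an explicit simplicial retraction $\N(H) \to \partial C$. First I would verify the topological setup: every nonempty proper subset $\sigma \subsetneq C$ lies in $\N(H)$, because any $u_j \in C \setminus \sigma$ is adjacent to every vertex of $\sigma$ (as $C$ is a clique) and hence is a common neighbor, so $\partial C$ is a subcomplex of $\N(H)$ whose geometric realization is an $(r-2)$-sphere. Complementing this, maximality of $C$ rules out any common neighbor of $C$ itself, so $C \notin \N(H)$; this is precisely what prevents the sphere $\partial C$ from being trivially filled in.

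Next I would define a candidate retraction $f \colon V(\N(H)) \to C$ by $f(u_i) = u_i$ and, for each $v \in V(\N(H)) \setminus C$, $f(v) = u_{i(v)}$ with $i(v) \in [r]$ chosen so that $v \not\sim u_{i(v)}$ (such an index exists by maximality of $C$). For $f$ to be a simplicial retraction onto $\partial C$, I need $f(\tau) \neq C$ for every simplex $\tau \in \N(H)$. Since every such simplex sits inside $N(y)$ for its common neighbor $y$, this reduces to the vertex-wise condition $f(N(y)) \neq C$ for every $y \in V(H)$.

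The heart of the argument is showing that a consistent choice of $\{i(v)\}$ producing this property exists precisely because $C$ cannot be extended to an $X_{U,V}$ subgraph. Suppose to the contrary that every choice produces some $y$ with $f(N(y)) = C$. Writing $A(y) = \{j : y \sim u_j\}$, maximality forces $A(y) \subsetneq [r]$, and for each $i \in [r] \setminus A(y)$ there must be a non-$C$ vertex $v_i \in N(y)$ with $v_i \not\sim u_i$. The vertices where the choice $i(v)$ is truly forced are the critical ones with $A(v) = [r] \setminus \{i\}$ for a unique $i$. I would then run a Hall-type marriage argument on the bipartite graph whose left side is $[r]$ and whose right side consists of non-$C$ critical vertices, with $(i,v)$ an edge iff $A(v) = [r] \setminus \{i\}$: the hypothesis that $C$ has no $X_{U,V}$ extension is exactly the non-existence of a matching saturating $[r]$, so by Hall's theorem some subset $I \subseteq [r]$ is deficient, and this deficiency is what I would exploit to assign the non-critical $i(v)$ consistently so as to block every potential $y$.

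The main obstacle is this combinatorial extraction: converting a systematic failure of simpliciality into the distinct critical non-$C$ vertices of all $r$ types required by the definition of $X_{U,V}$. Once a simplicial retraction $f$ is constructed, it is the identity on $\partial C$ by the definition $f(u_i) = u_i$, and passing to geometric realizations yields the topological retraction $|\N(H)| \to |\partial C| \cong \mathbb{S}^{r-2}$ asserted by the proposition.
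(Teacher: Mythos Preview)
The paper does not prove this proposition at all: it is quoted verbatim as \cite[Theorem~2.7]{kahle} and used only as a black box in the proof of Proposition~\ref{p:extension}. There is therefore no ``paper's own proof'' to compare your proposal against.

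As a standalone sketch your outline is along the right lines --- building a simplicial retraction $f\colon \N(H)\to\partial C$ that fixes $C$ and sends each $v\notin C$ to some non-neighbour $u_{i(v)}\in C$ --- and you correctly isolate the crux as ensuring $f(N(y))\neq C$ for every $y$. Note that your Hall argument simplifies considerably: since each critical vertex is adjacent to exactly one $i\in[r]$ in your bipartite graph, the failure of a perfect matching is equivalent to the existence of a single index $i^{\ast}$ with no critical vertex of type $i^{\ast}$. Exploiting this, one can choose every $i(v)\neq i^{\ast}$ (possible precisely because no $v$ has $A(v)=[r]\setminus\{i^{\ast}\}$), which immediately handles all $y$ with $y\not\sim u_{i^{\ast}}$ and all $y\in C$. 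The genuine remaining gap in your sketch is the case $y\notin C$ with $y\sim u_{i^{\ast}}$: here $u_{i^{\ast}}\in f(N(y))$, and you still need to rule out the other $u_j$'s appearing, which is not automatic from a single global choice of $i(v)$. Kahle's original argument handles this via an iterated folding/retraction rather than a one-shot vertex map, and your proposal would need a comparable refinement to close this case.
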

\begin{proof}[Proof of Proposition \ref{p:extension}.]
 Let $p  = n^{\alpha}$, where $ \frac{-2}{r-1} < \alpha  < \frac{-1}{r-1}, r \geq 2$. We shall set $G_n = G(n,p)$. Let 
\begin{align*}
\Lambda_r := & |\{ A \subset [n] \ | \ |A| = r, G_n[A] \  \text{is a maximal clique and} \  G_n[A] \not\subseteq  X_{A,A'} \ \text{for all}  \ A' \  \text{disjoint} \\
& \text{and} \ |A'| = r \}|.
 \end{align*}
 For a $A \subset [n]$ with $|A| = r$, let
 \begin{align*}
  I_A & := \1[\mbox{$G_n[A]$ is a $r$-clique}], \, \, J_A := \prod_{A' : A' \supsetneq A} \1[\mbox{$G_n[A']$ is not a clique}], \\
   K_A  &:= \prod_{ \substack{ A' : |A'| = r  \\ A' \cap A = \emptyset}}\1[[G_n[A] \not\subseteq X_{A,A'} ].  
  \end{align*} 

Then,  $\Lambda_r = \sum\limits_{A \subset [n], |A| = r} I_A J_A K_A$. By Proposition \ref{kahleprop}, the proof is complete provided we show that $\Lambda_r \geq 1$ w.h.p.. We shall use the following second moment bound to show the latter :
$$ \mathbb{P}(\Lambda_r \geq 1) \geq \frac{(\mathbb{E} \Lambda_r)^2} {\mathbb{E}\Lambda_r^2}.$$

To use the second moment bound, we first derive a lower bound for $\bE(\Lambda_r)$. Fix $A = [r]$ in the below derivation. 
%
\begin{align}
\bE(\Lambda_r) &= {n \choose r} \bE(I_AJ_AK_A) = {n \choose r} \bE\left(I_A (J_A - J_A\1[\cup_{\substack{A' : |A'| = r \\ A' \cap A =\emptyset}} \{G_n[A] \subset X_{A,A'} \} ]) \right) \no \\
& \geq {n \choose r} \bE \Bigg( I_A \Bigg( J_A -  \no \\
& ~~~~~~~~~~ \sum_{\substack{A' = \{u_1, \ldots, u_r\} \\ A' \cap A =\emptyset}} \1[G_n[A] \subset X_{A,A'} ] \1[i \nsim u_i \ \forall i] \prod_{v \notin A' \cup A}\1[ i \nsim v \ \mbox{for some} \ i \in A] \Bigg) \Bigg)  \no \\
& \geq  {n \choose r} \bE\left( \1[i \sim j \ \forall \ 1 \leq i  \neq j \leq r] \bE\left( \prod_{v \notin A}\1[ i \nsim v \ \mbox{for some $i \in A$}] - \right. \right. \no \\
&  \left. \left. \sum_{\substack{A' = \{u_1, \ldots, u_r\} \\ A' \cap A =\emptyset}} \1[u_i \sim j \ \forall \ 1 \leq i  \neq j \leq r]\1[i \nsim u_i \ \forall i] \prod_{v \notin A' \cup A}\1[ i \nsim v \ \mbox{for some $i \in A$}] \right) \right)  \no \\
& = {n \choose r} p^{{r \choose 2}} ((1-p^r)^{n-r} - {n-r \choose r} p^{r(r-1)}(1-p)^r(1-p^r)^{n-2r} ) \no \\
& \geq {n \choose r} p^{{r \choose 2}}(1-p^r)^{n-r}(1 - n^{r} p^{r(r-1)}(1-p)^r(1-p^r)^{-r}), \no
\end{align}
where in the equality in the penultimate line we have used the independence between the corresponding indicator random variables as they depend on disjoint sets of edges. Since $p = n^{\alpha}$ for $\alpha < \frac{-1}{r-1}$, we have that $n^{r} p^{r(r-1)}(1-p)^r(1-p^r)^{-r} \to 0$ and hence for large enough $n$, we derive that 
\begin{equation}
\label{eq:ELambda}
\bE(\Lambda_r) \geq  {n \choose r} p^{{r \choose 2}}(1-p^r)^{n-r} (1 - o(1)).
\end{equation}
%
%
%
%
Now, we proceed to derive upper bounds for the second moment. 
$$ \Lambda_r^2 =\sum\limits_{i = 0}^{r} \sum\limits_{\substack{ |A_1| = |A_2| = r \\ |A_1 \cap A_2| = i } } I_{A_1}J_{A_1} K_{A_1} I_{A_2} J_{A_2}K_{A_2}  \leq \sum\limits_{i=0}^r Y_i,$$
where $Y_i := \sum\limits_{\substack{|A_1| = |A_2| = r \\ |A_1 \cap A_2| = i} } I_{A_1} I_{A_2}$ for $0 \leq i \leq r$.
 Hence, using \eqref{eq:ELambda}, we derive that for large enough $n$
   \begin{align}
  \frac{\mathbb{E}\Lambda_r^2} {(\mathbb{E} \Lambda_r)^2} &\leq \frac{1} {(\mathbb{E} \Lambda_r)^2}  \sum\limits_{i=0}^r \mathbb{E} Y_i \nonumber \\
  & \leq \frac{1}{({n \choose r} p^{{r \choose 2}}(1-p^r)^{n-r} (1 - o(1)))^2} \sum\limits_{i=0}^r{n \choose 2r-i} p^{r(r-1) - \frac{i(i-1)}{2}}  \nonumber \\
  & = \frac{{n \choose 2r} }{{n \choose r}^2(1-p^r)^{2(n-r)}(1 - o(1))^2}  +  \sum\limits_{i=1}^r{n \choose 2r-i} \frac{ p^{r(r-1) - \frac{i(i-1)}{2}}  }  {{n \choose r}^2 p^{r(r-1)}(1-p^r)^{2(n-r)}(1 - o(1))^2} \nonumber \\
  & = \frac{{n \choose 2r} }{{n \choose r}^2(1-p^r)^{2(n-r)}(1 - o(1))^2}  +  \sum\limits_{i=1}^r \frac{  {n \choose 2r-i}  }  {{n \choose r}^2 n^{\alpha \frac{i(i-1)}{2}}(1-p^r)^{2(n-r)}(1 - o(1))^2} \nonumber \\
  & \leq  \frac{{n \choose 2r}}{{n \choose r}^2(1-p^r)^{2(n-r)}(1 -o(1))^2}  + C \sum\limits_{i=1}^r \frac{n^{2r}} {{n \choose r}^2 n^{ i + \alpha \frac{i(i-1)}{2}}(1-p^r)^{2(n-r)}(1 -o(1))^2} \nonumber \\
  & = 1 + o(1) \label{lambda0},
   \end{align}
where $C$ is a constant and \eqref{lambda0} follows as $1+\alpha(r-1) <  0$, $i + \alpha \frac{i(i-1)}{2} > 0$ (because $\alpha > \frac{-2}{r-1}$) and further $(1-p^r)^{2(n-r)} = e^{-2np^r}(1 + o(1)) = 1 + o(1)$ for large $n$. 
   
 From \eqref{lambda0}, we conclude that $\liminf_{n \to  \infty}\frac{(\mathbb{E} \Lambda_r)^2} {\mathbb{E}\Lambda_r^2} = 1$. Therefore by the second moment bound, we derive that $\lim_{n \to \infty}\mathbb{P}(\Lambda_r \geq 1) = 1$ as required.
\end{proof}
 
\section*{Acknowledgements}
   
   SS was financially supported by the Indian Statistical Institute Bangalore, India, where this work was done and the DST INSPIRE Faculty award of Dr. D. Y.. D.Y. was supported by the DST INSPIRE Faculty award and CPDA from the Indian Statistical Institute. The authors thank the two anonymous referees for many comments leading to an improved exposition.
   
 \bibliographystyle{plain}

\end{document}